\documentclass[10pt]{amsart}
\usepackage[margin=1.2in]{geometry}
\usepackage[notcite,notref]{showkeys}
\usepackage{xcolor}
\newtheorem{theorem}{Theorem}[section]
\newtheorem{lemma}[theorem]{Lemma}
\newtheorem{proposition}{Proposition}[section]

\theoremstyle{definition}

\theoremstyle{remark}
\newtheorem{remark}[theorem]{Remark}
\usepackage{cite}
\numberwithin{equation}{section}



\begin{document}
\title[Multi-species quantum BGK model]{BGK model of the multi-species Uehling-Uhlenbeck equation}

\author{Gi-Chan Bae}
\address{Department of mathematics, Sungkyunkwan University, Suwon 16419, Republic of Korea }
\email{gcbae02@skku.edu}

\author{Christian Klingenberg}
\address{Department of mathematics, W\"urzburg University, Emil Fischer Str. 40, 97074 W\"urzburg, GERMANY}
\email{klingen@mathematik.uni-wuerzburg.de}

\author{Marlies Pirner}
\address{Department of mathematics, Vienna University, Oskar-Morgenstern-Platz 1, 1090 Vienna, Austria}
\email{marlies.pirner@mathematik.uni-wuerzburg.de}

\author{Seok-Bae Yun}
\address{Department of mathematics, Sungkyunkwan University, Suwon 16419, Republic of Korea }
\email{sbyun01@skku.edu}



\keywords{BGK models, boltzmann equation, Uehling-Uhlenbeck equation, relaxation time approximation, gas mixture}

\begin{abstract}
We propose a BGK model of the quantum Boltzmann equation for gas mixtures. We also provide a sufficient condition that guarantees the existence of equilibrium coefficients so that the model shares the same conservation laws and $H$-theorem with the quantum Boltzmann equation. Unlike the classical BGK for gas mixtures, the equilibrium coefficinets of the local equilibiriums for quantum multi-species gases are defined through highly nonlinear relations that are not explicitly solvable. We verify in a unified way that such nonlinear relations uniquely determine the equilibrium coefficients under the condition, leading to the well-definedness of our model.
\end{abstract}

\maketitle
\tableofcontents
\section{Introduction}
\subsection{Quantum Boltzmann equation for gas mixture}
The quantum modification of the celebrated Boltzmann equation was made in \cite{uehling1933transport,uehling1934transport} to incorporate the quantum effect that cannot be neglected for light molecules (such as Helium) at low temperature. Quantum Boltzmann equation is now fruitfully employed not just for low temperature gases, but in various circumstances such as the study of carrior mobility in various electronic devices. 
When the gas is composed of several different types of molecules (gas mixture), the quantum Boltzmann equation takes the form (For simplicity, we restrict ourselves to two species case):
\begin{align}\label{MQBE}
	\begin{aligned}
	\partial_t f_1+\frac{p}{m_1}\cdot \nabla_xf_1=Q_{11}(f_1,f_1)+Q_{12}(f_1,f_2), \cr
	\partial_t f_2+\frac{p}{m_2}\cdot \nabla_xf_2=Q_{22}(f_2,f_2)+Q_{21}(f_2,f_1).
\end{aligned}
\end{align}
The momentum distribution function $f_i(x,p,t)$ denotes the number density at the phase point $(x,p)\in \Omega_x\times \mathbb{R}^3_p$ at time $t$. The collision operator $Q_{ij}$ $(i,j=1,2)$ takes the following form:\newline

$\bullet$ Fermion-Fermion $(-)$, Boson-Boson $(+)$.
\begin{align*}
Q_{ij}(f_i,f_j)=\int_{\mathbb{R}^3}\int_{\mathbb{S}^2}B_{ij}\left(\bigg|\frac{p}{m_i}-\frac{p_*}{m_j}\bigg|,w\right)
\{f_i'f_{j,*}'(1\pm f_i)(1\pm f_{j,*})-f_if_{j,*}(1\pm f_i')(1\pm f_{j,*}')\} dwdp_*,
\end{align*}

$\bullet$ Fermion ($f_1$)-Boson ($f_2$) interaction:
\begin{align*}
Q_{ij}(f_i,f_j)&=\int_{\mathbb{R}^3}\int_{\mathbb{S}^2}B_{ij}\left(\bigg|\frac{p}{m_i}-\frac{p_*}{m_j}\bigg|,w\right)
\{f_i'f_{j,*}'(1+\tau(i) f_i)(1+\tau(j)f_{j,*})\cr
&\hspace{4cm}-f_if_{j,*}(1+\tau(i) f_i')(1+ \tau(j)f_{j,*}')\} dwdp_*,
\end{align*}
\hspace{3mm} where $\tau(1)=-1$, $\tau(2)=1$. We assume $B_{12}\left(\cdot,w\right)=B_{21}\left(\cdot,w\right)$ for both cases, and
we used the abbreviated notation:
\begin{align*}
	f_i=f_i(x,p,t), ~f_{i,*}=f_i(x,p_*,t), ~f_i^{\prime}=f_i(x,p',t),~f^{\prime}_{i,*}=f_i(x,p_*',t), ~ i=1,2.
\end{align*}
The relation between the pre-collisonal momenta $(p,p_*)$, and the post-collisional momenta $(p',p'_*)$ in $Q_{ij} ~(i,j=1,2)$ can be derived from the local conservation laws:
\begin{align}\label{mom_after_coll}
\begin{split}
p'+p'_*&=p+p_*, \cr
\frac{|p'|^2}{2m_i}+\frac{|p_*'|^2}{2m_j}&=\frac{|p|^2}{2m_i}+\frac{|p_*|^2}{2m_j},
\end{split}
\end{align}
in the following explicit forms:
\begin{align*}
	p'&= p-\frac{2m_im_j}{m_i+m_j}w\left[\left(\frac{p}{m_i}-\frac{p_*}{m_j}\right)\cdot w\right], \cr
	p_*'&= p_*+\frac{2m_im_j}{m_i+m_j}w\left[\left(\frac{p}{m_i}-\frac{p_*}{m_j}\right)\cdot w\right].
\end{align*}

The collision operator has 5 collision invariants: $1,p,|p|^2$~ $(k=1,2)$:
\begin{align}
	\begin{split}\label{cancellation}
		&\int_{\mathbb{R}^3}Q_{kk}(f_k,f_k)dp=0,\hspace{0.2cm}\int_{\mathbb{R}^3}Q_{12}(f_1,f_2) dp=\int_{\mathbb{R}^3}Q_{21}(f_2,f_1) dp = 0,\cr
		&\int_{\mathbb{R}^3}Q_{kk}(f_k,f_k)pdp=0,\hspace{0.2cm}\int_{\mathbb{R}^3}\left\{Q_{12}(f_1,f_2)+Q_{21}(f_2,f_1)\right\}p dp = 0 ,\cr
		&\int_{\mathbb{R}^3}Q_{kk}(f_k,f_k)\frac{|p|^2}{2m_k}dp=0,\hspace{0.2cm}\int_{\mathbb{R}^3}\left\{Q_{12}(f_1,f_2)\frac{|p|^2}{2m_1}+Q_{21}(f_2,f_1)\frac{|p|^2}{2m_2}\right\} dp = 0,
	\end{split}
\end{align}
which leads to the conservation of total mass, momentum and energy:
\begin{align}
\begin{split}\label{conservation}
&\frac{d}{dt}\int_{\mathbb{T}^3\times \mathbb{R}^3}f_1dxdp=0, \quad \frac{d}{dt}\int_{\mathbb{T}^3\times \mathbb{R}^3}f_2dxdp=0, \cr
&\frac{d}{dt}\left(\int_{\mathbb{T}^3\times \mathbb{R}^3}f_1pdxdp+\int_{\mathbb{T}^3\times \mathbb{R}^3}f_2pdxdp\right)=0, \cr
&\frac{d}{dt}\left(\int_{\mathbb{T}^3\times \mathbb{R}^3}f_1\frac{|p|^2}{2m_1}dxdp+\int_{\mathbb{T}^3\times \mathbb{R}^3}f_2\frac{|p|^2}{2m_2}dxdp\right)=0.
\end{split}
\end{align}
Upon defining the velocity distribution function $\bar{f}_i(x,v,t)$ by the following relation with respect to the momentum distribution $f_i(x,p,t)$:
\[
\bar{f}_i(x,v,t)= m_i^3f_i(x,p,t),\quad \left(v=\frac{p}{m_i}\right)
\]
we can recover the usual conservation laws as in \cite{MR2370359,MR3579578,garzo1989kinetic}. (See Appendix). The collision operator $Q_{ij}$ $(i,j\in\{1,2\})$ also satisfies the following entropy dissipation property:
\begin{align}\label{entropy} 
	\begin{split}
		\int_{\mathbb{R}^3}\ln \frac{f_1}{1+\tau(1) f_1} Q_{11}(f_1,f_1) dp \leq 0&,\quad \int_{\mathbb{R}^3}\ln \frac{f_2}{1+\tau(2) f_2} Q_{22}(f_2,f_2) dp \leq 0,\cr
		\int_{\mathbb{R}^3}\ln \frac{f_1}{1+\tau(1) f_1} Q_{12}(f_1,f_2) dp&+\int_{\mathbb{R}^3}\ln \frac{f_2}{1+\tau(2) f_2}Q_{21}(f_2,f_1) dp\leq 0.
	\end{split}
\end{align}
where $\tau(i)=-1 $ when $f_i$ denotes distribution of fermion and $\tau(i)=+1 $ when $f_i$ denotes distribution of boson.

Such dissipation implies the celebrated $H$-theorem for quantum mixture:

$\bullet$ Fermion-Fermion $(-)$, Boson-Boson $(+)$:
\begin{align*}
	\frac{d}{dt}H(f_1,f_2)&=\int_{\mathbb{R}^3}\ln \frac{f_1}{1\pm f_1} Q_{11}(f_1,f_1) dp+\int_{\mathbb{R}^3}\ln \frac{f_2}{1\pm f_2} Q_{22}(f_2,f_2) dp\cr
	&+\int_{\mathbb{R}^3}\ln \frac{f_1}{1\pm f_1} Q_{12}(f_1,f_2) dp+\int_{\mathbb{R}^3}\ln \frac{f_2}{1\pm f_2} Q_{21}(f_2,f_1) dp\leq0,
\end{align*}

$\bullet$ Fermion ($f_1$)-Boson ($f_2$):
\begin{align*}
\frac{d}{dt}H(f_1,f_2)&=\int_{\mathbb{R}^3}\ln \frac{f_1}{1-f_1} Q_{11}(f_1,f_1) dp+\int_{\mathbb{R}^3}\ln \frac{f_2}{1+f_2} Q_{22}(f_2,f_2) dp\cr
&+\int_{\mathbb{R}^3}\ln \frac{f_1}{1-f_1} Q_{12}(f_1,f_2) dp+\int_{\mathbb{R}^3}\ln \frac{f_2}{1+f_2} Q_{21}(f_2,f_1) dp\leq0,
\end{align*}

where $H(f_1,f_2)$ denotes  the $H$-functional:

$\bullet$ Fermion-Fermion interaction:
\[
H(f_1,f_2)=\int_{\mathbb{R}^3} f_1\ln f_1+(1-f_1)\ln (1-f_1)dp+\int_{\mathbb{R}^3} f_2\ln f_2+(1-f_2)\ln (1-f_2)dp.
\]

$\bullet$  Boson-Boson interaction:
\[
H(f_1,f_2)=\int_{\mathbb{R}^3} f_1\ln f_1-(1+f_1)\ln (1+f_1)dp+\int_{\mathbb{R}^3} f_2\ln f_2-(1+f_2)\ln (1+f_2)dp.
\]

$\bullet$ Fermion ($f_1$)-Boson ($f_2$) interaction:
\[
H_{FB}(f_1,f_2)=\int_{\mathbb{R}^3} f_1\ln f_1+(1-f_1)\ln (1-f_1)dp+\int_{\mathbb{R}^3} f_2\ln f_2-(1+f_2)\ln (1+f_2)dp.
\]
The r.h.s of (\ref{MQBE}) vanishes if and only if $f_1$ and $f_2$ are quantum equilibrium: \newline
\begin{itemize}
\item Fermion-Fermion $(+)$, Boson-Boson interaction $(-)$:
\begin{align*}
f_1(x,p,t)=\frac{1}{e^{m_1a(x,t)\big|\frac{p}{m_1}-b(x,t)\big|^2+c_1(x,t)}\pm1},\quad
f_2(x,p,t)=\frac{1}{e^{m_2a(x,t)\big|\frac{p}{m_2}-b(x,t)\big|^2+c_2(x,t)}\pm1}.
\end{align*}

\item Fermion ($f_1$)-Boson ($f_2$) interaction
\begin{align*}
f_1(x,p,t)=\frac{1}{e^{m_1a(x,t)\big|\frac{p}{m_1}-b(x,t)\big|^2+c_1(x,t)}+1},\quad
f_2(x,p,t)=\frac{1}{e^{m_2a(x,t)\big|\frac{p}{m_2}-b(x,t)\big|^2+c_2(x,t)}-1}.
\end{align*}
\end{itemize}

\subsection{Quantum BGK model for gas mixture}
 In this paper, we propose a BGK type relaxation model of (\ref{MQBE}) :
\begin{align}\label{MQBGK}
\begin{split}
	\partial_t f_1+\frac{p}{m_1}\cdot \nabla_xf_1=\mathcal{R}_{11}+\mathcal{R}_{12}, \cr
	\partial_t f_2+\frac{p}{m_2}\cdot \nabla_xf_2=\mathcal{R}_{21}+\mathcal{R}_{22},
\end{split}
\end{align}
where $\mathcal{R}_{ij}$ denotes the relaxation operator for the interactions of $i$th and $j$th component. More explicitly, they are defined as follows:
\begin{itemize}
\item  Fermion-Fermion interaction ($i\neq j$):
 \[
 \mathcal{R}_{ii}=\mathcal{F}_{ii}-f_{i},\quad \mathcal{R}_{ij}=\mathcal{F}_{ij}-f_{i}, \qquad (i=1,2)
 \]
where $\mathcal{F}_{ii}$ denotes the Fermi-Dirac distribution for same-species interaction: 
\begin{align*}
	\mathcal{F}_{11}=\frac{1}{e^{m_1a_1\big|\frac{p}{m_1}-b_1\big|^2+c_1}+1},\quad
	\mathcal{F}_{22}=\frac{1}{e^{m_2a_2\big|\frac{p}{m_2}-b_2\big|^2+c_2}+1},
\end{align*}
 and $\mathcal{F}_{ij}$ denote Fermi-Dirac distribution for inter-species interactions:
\begin{align*}
	\mathcal{F}_{12}=\frac{1}{e^{m_1a\big|\frac{p}{m_1}-b\big|^2+c_{12}}+1},\quad
	\mathcal{F}_{21}=\frac{1}{e^{m_2a\big|\frac{p}{m_2}-b\big|^2+c_{21}}+1}.
\end{align*}
\item Boson-Boson interaction ($i\neq j$):
\[
\mathcal{R}_{ii}=\mathcal{B}_{ii}-f_{i},\quad
\mathcal{R}_{ij}=\mathcal{B}_{ij}-f_{i}, \qquad (i=1,2)
\]
where $\mathcal{B}_{ii}$ denotes the Bose-Einstein distribution for same-species interaction : 
\begin{align*}
	\mathcal{B}_{11}=\frac{1}{e^{m_1a_1\big|\frac{p}{m_1}-b_1\big|^2+c_1}-1},\quad
	\mathcal{B}_{22}=\frac{1}{e^{m_2a_2\big|\frac{p}{m_2}-b_2\big|^2+c_2}-1},
\end{align*}
while $\mathcal{B}_{ij}$ denote Bose-Einstein distribution for inter-species interactions:
\begin{align*}
\mathcal{B}_{12}=\frac{1}{e^{m_1a\big|\frac{p}{m_1}-b\big|^2+c_{12}}-1},\quad
\mathcal{B}_{21}=\frac{1}{e^{m_2a\big|\frac{p}{m_2}-b\big|^2+c_{21}}-1}.
\end{align*}
 
\item Fermion ($f_1$)-Boson ($f_2$) interaction:
\[
\mathcal{R}_{11}=\mathcal{F}_{11}-f_{1} \quad 
\mathcal{R}_{22}=\mathcal{B}_{22}-f_{2},
\]
and
\[
\mathcal{R}_{12}=\mathcal{F}_{12}-f_{1} \quad 
\mathcal{R}_{21}=\mathcal{B}_{21}-f_{2}.
\]

 \end{itemize}
For later convenience, and to unify the proof, we introduce the following notation for quantum equilibriums:
\begin{center}
$\bullet$ {\bf The quantum equilibrium $\mathcal{M}_{ij}$}
\end{center}
 Next, we will make statements on the equilibrium distributions in the relaxation operators that correspond to $\mathcal{F}_{ij}$ in the fermion case and $\mathcal{B}_{ij}$ in the boson case. In order not to list all different cases separately, we denote the equilibrium distribution by $\mathcal{M}_{ij}$ which is equal to a Fermi-Dirac or a Bose-Einstein distribution depending on the case we consider:
\begin{enumerate}
\item Fermion-Fermion interaction 
\begin{align*}
\mathcal{M}_{ij}=\mathcal{F}_{ij}.\quad(i,j=1,2)
\end{align*}
\item Boson-Boson interaction 
\begin{align*}
\mathcal{M}_{ij}=\mathcal{B}_{ij}.\quad(i,j=1,2)
\end{align*}
\item Fermion ($f_1$) - Boson ($f_2$) interaction 
\begin{align*}
	\mathcal{M}_{1j}=\mathcal{F}_{1j}, \quad \mathcal{M}_{2j}=\mathcal{B}_{2j}.\quad (j=1,2)
\end{align*}
\end{enumerate}

The excessive computational cost has already been a very serious obstacle even for the classical Boltzmann equation.
Since the difficulty mostly lies in the computation of the collision operator, various efforts to approximate the complicated
collision process with a numerically more amenable model have been made. The BGK model is introduced 
in \cite{bhathnagor1954model} as a result of such efforts, and now become the most popular
approximate model of the Boltzmann equation because it provides a very reliable results in a wide range of kinetic-fluid regime covering much of the practical problems at relatively low computational costs. 

As in the classical case, the quantum BGK models are widely used in place of the quantum Boltzmann equation. However, the quantum BGK model for mixture has not been rigorously studied yet. More precisely, 
whether the relaxation operator can be soundly defined in a rigorous manner so that it satisfies the same
conservation laws and the $H$-theorem as the quantum Boltzmann has never been rigorously verified in the literature. 
The well-definedness of such equilibrium coefficients for $\mathcal{M}_{11}$ and $\mathcal{M}_{22}$ follows directly from the relevant results for the one-species quantum BGK model in \cite{MR4064205,MR4096124,MR2145021,MR1751703,MR2378854}. Thus, we focus on the determination of the equilibrium coefficients for the mixture equilibrium $\mathcal{M}_{12}$ and $\mathcal{M}_{21}$.

\subsection{Determination of $\mathcal{M}_{ij}$ $(i,j=1,2)$}

The quantum BGK model may be far more amenable in terms of numerical computation, but the highly non-linear nature of the QBGK model gives rise to various difficulties in the analysis of the model.
As such, it turns out that the requirement that the QBGK model must share the conservation laws and $H$-theorem with the quantum Boltzmann equation, leads to a set of very complicated nonlinear relations for the equilibrium coefficients (See Section 2.2). Moreover, they involve different conditions of solvability according to the nature of the interactions: Fermion-Fermion interaction, Fermion-Boson interaction, Boson-Boson interaction.

In this paper, we explicitly derive the nonlinear relations among the equilibrium coefficients of $\mathcal{M}_{11}$, $\mathcal{M}_{22}$, $\mathcal{M}_{12}$, $\mathcal{M}_{21}$ that arise from the physical requirement of the equation, and  verify in a unified way that those nonlinear relations uniquely determined the coefficients under certain conditions.

First, we note that we need to determine the mixture local equilibrium $\mathcal{M}_{ij}$ in such way that the relaxation operator in the r.h.s of (\ref{MQBGK}) satisfies the same cancellation properties as (\ref{cancellation}) and the entropy dissipation in (\ref{entropy}) are determined by following conservation laws.

To be more specific, let $N_i$, $P_i$ and $E_i$ $(i=1,2)$ denote
\begin{align*}
	N_i=\int_{\mathbb{R}^3}f_idp,\quad P_i=\int_{\mathbb{R}^3}f_ipdp, \quad E_i=\int_{\mathbb{R}^3}f_i\frac{|p|^2}{2m_i}dp .
\end{align*}
Assuming that the r.h.s of (\ref{MQBGK}) satisfies the same identities in (\ref{cancellation}), we arrive at the following identities:\newline
\begin{align}\label{conserv 1}
	\int_{\mathbb{R}^3}\mathcal{M}_{ii}dp=N_i, \qquad \int_{\mathbb{R}^3}\mathcal{M}_{ii}pdp=P_i, \qquad \int_{\mathbb{R}^3}\mathcal{M}_{ii}\frac{|p|^2}{2m_i}dp=E_i, \quad (i=1,2)
\end{align}
and 
\begin{align}\label{conserv 2}
	\begin{split}
		&\int_{\mathbb{R}^3}\mathcal{M}_{12}dp=N_1, \qquad \int_{\mathbb{R}^3}\mathcal{M}_{21}dp=N_2, \cr
		&\int_{\mathbb{R}^3}\mathcal{M}_{12}pdp+\int_{\mathbb{R}^3}\mathcal{M}_{21}pdp=P_1+P_2, \cr
		&\int_{\mathbb{R}^3}\mathcal{M}_{12}\frac{|p|^2}{2m_1}dp+\int_{\mathbb{R}^3}\mathcal{M}_{21}\frac{|p|^2}{2m_2}dp=E_1+E_2.
	\end{split}
\end{align}

Our goal is to show that, for each fixed $N_i$, $P_i$, $E_i$ $(i=1,2)$, the relations in \eqref{conserv 1} and \eqref{conserv 2} completely and uniquely determine  $\mathcal{M}_{ij}$, which is stated in Theorem \ref{main result QQ}.

\subsection{Literature review: Quantum BGK models}

The quantum modification of the celebrated Boltzmann equation,
which is often called Uehling-Uhlenbeck equation or Nordheim equation in the literature,
was made in \cite{fowler1928electron,kikuchi1930kinetische,uehling1933transport,uehling1934transport} and soon recognized as a fundamental equation to describe quantum particles at mesoscopic level.
But due to the complexity of the collision operator, which is a serious obstacle to practical application of the equation,
and relaxation time approximations, or quantum BGK models are widely used to understand the transport phenomena and compute transport coefficients for semi-conductor device and crystal lattice \cite{ashcroft1976solid,duan2005introduction,jungel2009transport,MR2786396,MR1084373,MR1063852,MR3323935,ihn2004electronic,rapp2010equilibration} and various flow problems involving quantum effects \cite{MR3131732,MR2855649,filbet2010numerical,MR2786396,MR2467628,MR2923847,yang2009lattice,mcgaughey2004quantitative,schneider2012fermionic,sparavigna2016boltzmann}. For the development of numerical methods for quantum BGK model, we refer to  \cite{MR3131732,filbet2010numerical,MR2855649,MR2892305,MR1489261,suh1991numerical,MR2467628,MR2923847,yang2009lattice,MR3581504}.
We mention that the prototype of relaxation type models in quantum theory can be traced back to the Drude model \cite{drude1900elektronentheorie,drude1900elektronentheorie2} which successfully explained the fundamental transport property of electrons such as the Ohm's law or Hall effect. 

Mathematical study on the quantum BGK model is in its initial state.
Nouri studied the existence of weak solutions for a stationary quantum BGK model with a discretized condensation term in \cite{MR2378854}. Braukhoff \cite{MR3918275,braukhoff2018global}  established the existence of analytic solutions and studied its asymptotic behaviour for a quantum BGK type model describing the dynamics of the ultra-cold atoms in an optical lattice. Bae and Yun considered the existence and asymptotic stability of a fermionic quantum BGK model near a global Fermi-Dirac distribution in \cite{MR4096124}.\newline


\noindent{\bf BGK models for gas mixtures:}
There are many BGK models for gas mixtures proposed in the literature. 
Examples include the model of Gross and Krook \cite{gross1956model}, the model of Hamel \cite{hamel1965kinetic}, the model of Greene \cite{greene1973improved}, the model of Garzo, Santos and Brey \cite{garzo1989kinetic}, the model of Sofonea and Sekerka \cite{sofonea2001bgk}, the model by Andries, Aoki and Perthame  \cite{MR1889599}, the model of Brull, Pavan and Schneider \cite{MR2896732}, the model of Klingenberg, Pirner and Puppo \cite{MR3579578}, the model of Haack, Hauck, Murillo \cite{MR3680629} and the model of Bobylev, Bisi, Groppi, Spiga \cite{MR3815148}. 
BGK models have also been extended to ES-BGK models, polyatomic molecules or chemically reactive gas mixtures; see for example \cite{MR3436242,bisi2010kinetic,groppi2011kinetic,MR3960644,MR3828279,MR3880999, MR3943439,MR0258399,MR3720827}. 
BGK  models are often used in applications because they give rise to efficient numerical computations as compared to models with Boltzmann collision terms \cite{MR2403867,MR3411286,MR3231759,MR3202241,MR2320557, MR2674294,MR2970736,MR3892427}.

In the following Section 2.1, we state our main result. In Section 2.2, we derive a set of nonlinear 
functional relations and show that the equilibirum coefficients can be uniquely determined to satisfy 
the conservation laws of mass, momentum and energy. In Section 2.3, the BGK model defined with the equilibrium coefficients derived in Section 2.2, also satisfies the $H$-theorem.

\section{Determination of the relaxation operators for quantum mixture}
\subsection{Main result for general quantum-quantum interaction}
We now state our main result stating that the equilibrium coefficients, under appropriate
assumptions on $N_i$, $P_i$ and $E_i$, can be uniquely determined. To simplify the presentation,
we introduce $h_{\pm1}$, $j_{\pm1}$, $k$ by

\begin{align*} 
	h_{\pm1}(x)=\int_{\mathbb{R}^3}\frac{1}{e^{|p|^2+x}\pm1}dp,\qquad j_{\pm1}(x)=\frac{\int \frac{1}{e^{|p|^2+x}\pm1}dp}{\left(\int \frac{|p|^2}{e^{|p|^2+x}\pm1}dp\right)^{3/5}},  	
\end{align*}
and
\begin{align*}
k_{\tau,\tau'}(x,y)= \frac{m_1^{\frac{3}{2}}\int_{\mathbb{R}^3}\frac{1}{e^{|p|^2+x}+\tau}dp}{\left(m_1^{\frac{3}{2}}\int_{\mathbb{R}^3}\frac{|p|^2}{e^{|p|^2+x}+\tau}dp+m_2^{\frac{3}{2}}\int_{\mathbb{R}^3}\frac{|p|^2}{e^{|p|^2+y}+\tau'}dp\right)^\frac{3}{5}},
\end{align*}	
where the pair $(\tau,\tau')$ is chosen as follows:  
\begin{align*}
	(\tau,\tau')=\left\{
	\begin{array}{ll}
		(+1,+1)& \text{(fermion-fermion)}\cr
		(-1,-1)&\mbox{(boson-boson)}\cr
		(+1,-1)&\mbox{(fermion-boson)}\cr
		\end{array}\right.
\end{align*}

Using $h$ and $k$, we define $g$, which is defined as a composite function of $k$ and $h^{-1}$, as follows:
\begin{align}\label{QQg(x)}
g_{\tau,\tau'}(x)=k_{\tau,\tau'}\big(x, y(x)\big) =\frac{m_1^{\frac{3}{2}}\int_{\mathbb{R}^3}\frac{1}{e^{|p|^2+x}+\tau}dp}{\left(m_1^{\frac{3}{2}}\int_{\mathbb{R}^3}\frac{|p|^2}{e^{|p|^2+x}+\tau}dp+m_2^{\frac{3}{2}}\int_{\mathbb{R}^3}\frac{|p|^2}{e^{|p|^2+y(x)}+\tau'}dp\right)^\frac{3}{5}},
\end{align}
where $y(x)$ denotes
\begin{align*}
	y(x)=h_{\tau'}^{-1}\left(\frac{m_1^{\frac{3}{2}}N_2}{m_2^{\frac{3}{2}}N_1}h_{\tau}(x)\right).
\end{align*}
Note that $h_{\pm1}^{-1}$ always exist since $h_{\pm1}$ is strictly decreasing. For simplicity of notation, we define $l:\{+1,-1\}\rightarrow[-\infty,\infty]$ by
\begin{align*}
l(x)=\left\{
\begin{array}{l}
l(+1)=-\infty, \cr
l(-1)=0.\cr
\end{array}\right.
\end{align*}
In the following theorem, $j_{+1}(-\infty)$ is understood in the following sense:
 \[j_{+1}(-\infty)=\displaystyle{\lim_{x\rightarrow -\infty}j_{+1}(x)}.\]
 We note from \cite{MR4064205,MR4096124,MR1861208} that
\begin{align*}
\lim_{x\rightarrow -\infty}j_{+1}(x)=\frac{(4\pi)^{\frac{2}{5}}5^{\frac{3}{5}}}{3}. 
\end{align*}
%
%
%
%
\begin{theorem}\label{main result QQ}
	$(1)$ Assume, 
	\begin{align*}
		\frac{N_1}{\left(2m_1E_1-{P_1^2}/{N_1}\right)^{\frac{3}{5}}} \leq j_{\tau}(l(\tau)), \qquad 	\frac{N_2}{\left(2m_2E_2-{P_2^2}/{N_2}\right)^{\frac{3}{5}}} \leq j_{\tau'}(l(\tau')).
	\end{align*}
Then, we can define $c_i$ $(i=1,2)$ as the unique solution of
\begin{align*}
	j_{\tau}(c_1)=\frac{N_1}{\left(2m_1E_1-{|P_1|^2}/{N_1}\right)^{\frac{3}{5}}}, \qquad j_{\tau'}(c_2)=\frac{N_2}{\left(2m_2E_2-{|P_2|^2}/{N_1}\right)^{\frac{3}{5}}}.
\end{align*}
With $c_1$, $c_2$ obtained above, we then define $a_i$ $(i=1,2)$ by 
\begin{align*}
	a_1=m_1\left({\int_{\mathbb{R}^3}\frac{1}{e^{|p|^2+c_1}+\tau}dp}\right)^\frac{2}{3}N_1^{-\frac{2}{3}},\qquad a_2=m_2\left({\int_{\mathbb{R}^3}\frac{1}{e^{|p|^2+c_2}+\tau'}dp}\right)^\frac{2}{3}N_2^{-\frac{2}{3}},
\end{align*}
and
\begin{align*}
	b_1=\frac{P_1}{m_1N_1},\quad b_2=\frac{P_2}{m_2N_2}.
\end{align*}
Then, with such choice of $a_i$, $b_i$ and $c_i$,  $\mathcal{M}_{11}$ and $\mathcal{M}_{22}$ satisfies (\ref{conserv 1}).\newline

\noindent\noindent$(2)$ Assume further that
\begin{align*}
\frac{N_1}{\left(2E_1+2E_2-\frac{|P_1+P_2|^2}{m_1N_1+m_2N_2}\right)^{\frac{3}{5}}}\leq g_{\tau,\tau'}\left(\max\left\{l(\tau), h_{\tau}^{-1}\left(\frac{m_2^{\frac{3}{2}}N_1}{m_1^{\frac{3}{2}}N_2}h_{\tau'}(l(\tau'))\right)\right\} \right).
\end{align*}

Then $c_{12}$, $c_{21}$ are defined as a unique solution of the following relations:
\begin{align*}
	\frac{m_1^{\frac{3}{2}}h_{\tau}(c_{12})}{m_2^{\frac{3}{2}}h_{\tau'}(c_{21})}=\frac{N_1}{N_2}, \quad
	k_{\tau,\tau'}(c_{12}, c_{21})=\frac{N_1}{\left(2E_1+2E_2-\frac{|P_1+P_2|^2}{m_1N_1+m_2N_2}\right)^{\frac{3}{5}}}.
\end{align*}
With such $c_{12}$ and $c_{21}$, we define $a$ and $b$ by 
\begin{align*}
	a=\left(\frac{m_1^{\frac{3}{2}}\int_{\mathbb{R}^3}\frac{|p|^2}{e^{|p|^2+c_{12}}+\tau}dp+m_2^{\frac{3}{2}}\int_{\mathbb{R}^3}\frac{|p|^2}{e^{|p|^2+c_{21}}+\tau'}dp}{2E_1+2E_2-\frac{|P_1+P_2|^2}{m_1N_1+m_2N_2}}\right)^{\frac{2}{5}},\quad
	b=\frac{P_1+P_2}{m_1N_1+m_2N_2},
\end{align*}
Then, with these choices of equilibrium coefficients, our quantum BGK model for gas mixture (\ref{MQBGK}) satisfies (\ref{conserv 2}).\newline

\noindent$(3)$ With the choice of equilibrium coefficients as in (1), (2), the quantum BGK model for gas mixture (\ref{MQBGK}) satisfies the $H$-theorem. The equality in the $H$-Theorem is characterized by $f_1$ and $f_2$ being two Fermion distributions in the Fermion-Fermion case, two Bose distributions in the Boson- Boson case and a Fermion distribution and a Bose distribution in the Fermion-Boson case. In all the cases, these equilibrium distributions have the same $a$ and $b$.
\end{theorem}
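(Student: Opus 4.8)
The plan is to take the three parts in order, reducing each conservation requirement to an invertibility statement for one of the scalar functions already introduced, and then to derive the $H$-theorem from convexity together with the conservation laws established in (1)--(2). \emph{For Part (1)}, the three constraints in \eqref{conserv 1} decouple cleanly. After the change of variables $p\mapsto\sqrt{a_i/m_i}\,(p-m_ib_i)$ that centers and rescales $\mathcal{M}_{ii}$, the equilibrium is even about $p=m_ib_i$, so the momentum constraint $\int\mathcal{M}_{ii}\,p\,dp=P_i$ forces the drift to be the bulk velocity $b_i=P_i/(m_iN_i)$ and kills the odd parts of the remaining integrands. The mass and energy constraints then combine into a single \emph{scale-invariant} relation in which $a_i$ cancels and the internal energy $2m_iE_i-|P_i|^2/N_i$ appears; this is exactly $j_\tau(c_i)=N_i/(2m_iE_i-|P_i|^2/N_i)^{3/5}$. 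Since $h_{\pm1}$ is strictly decreasing, one checks $j_\tau$ is strictly monotone on its domain, so this relation determines $c_i$ uniquely as soon as the right-hand side lies in the range of $j_\tau$; this is precisely the hypothesis $N_i/(\cdots)^{3/5}\le j_\tau(l(\tau))$, the endpoint $l(\tau)$ encoding the admissible boundary ($-\infty$ for fermions, $0$ for bosons to avoid Bose--Einstein condensation). With $c_i$ fixed, the mass equation alone pins down $a_i$, completing (1).

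\emph{For Part (2)}, the two inter-species equilibria $\mathcal{M}_{12},\mathcal{M}_{21}$ must share one pair $(a,b)$, so the constraints in \eqref{conserv 2} are genuinely coupled. The total-momentum constraint, read through the common drift $b$, forces $b=(P_1+P_2)/(m_1N_1+m_2N_2)$ and again removes the odd parts. The two separate mass constraints give one relation between $c_{12}$ and $c_{21}$, namely $m_1^{3/2}h_\tau(c_{12})/(m_2^{3/2}h_{\tau'}(c_{21}))=N_1/N_2$; solving for $c_{21}$ through the decreasing, hence invertible, map $h_{\tau'}^{-1}$ produces exactly $c_{21}=y(c_{12})$. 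Substituting this into the combined mass/energy scale-invariant relation $k_{\tau,\tau'}(c_{12},c_{21})=N_1/(\cdots)^{3/5}$ collapses it to the single-variable equation $g_{\tau,\tau'}(c_{12})=N_1/(\cdots)^{3/5}$. I would then show $g_{\tau,\tau'}$ is strictly monotone and identify its range, so that $c_{12}$, and with it $c_{21}=y(c_{12})$ and finally $a$, is uniquely determined, solvability being exactly the stated bound at the endpoint $\max\{l(\tau),h_\tau^{-1}((m_2^{3/2}N_1/m_1^{3/2}N_2)h_{\tau'}(l(\tau')))\}$.

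\emph{The main obstacle} is precisely this monotonicity and range analysis of $g_{\tau,\tau'}$. Unlike $j_\tau$, the function $g_{\tau,\tau'}$ is a composite of $k_{\tau,\tau'}$ with the implicitly defined $y(x)$, itself built from $h_{\tau'}^{-1}$; establishing strict monotonicity therefore forces one to differentiate through $y(x)$ and control the sign of the resulting expression uniformly across the three sign-pairs $(\tau,\tau')$. The bookkeeping is most delicate in the Boson--Boson case, where both $c_{12}$ and $c_{21}=y(c_{12})$ must stay strictly positive to keep $\mathcal{M}_{12},\mathcal{M}_{21}$ positive and integrable; this is exactly why the admissible endpoint is the maximum of $l(\tau)$ and the $h_\tau^{-1}$-image of the other variable's boundary, guaranteeing that $y(x)$ itself remains in its admissible domain.

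\emph{For Part (3)}, with (1)--(2) in hand the $H$-theorem follows from convexity. Writing $h_i$ for the quantum entropy density of species $i$, so that $h_i'(f)=\ln\frac{f}{1+\tau(i)f}$ and $h_i''(f)=\frac{1}{f(1+\tau(i)f)}>0$ on the admissible range, I would use the two tangent-line inequalities $h_i'(f_i)(\mathcal{M}_{ij}-f_i)\le h_i(\mathcal{M}_{ij})-h_i(f_i)\le h_i'(\mathcal{M}_{ij})(\mathcal{M}_{ij}-f_i)$. Each same-species contribution is nonpositive because $\mathcal{M}_{ii}$ matches all moments of $f_i$ by \eqref{conserv 1}, so $\int h_i'(\mathcal{M}_{ii})(\mathcal{M}_{ii}-f_i)\,dp=0$, the integrand being a linear combination of $1,p,|p|^2$. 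For the inter-species pair one sums the two chains: since $h_i'(\mathcal{M}_{ij})=-(m_ia|p/m_i-b|^2+c_{ij})$ shares the same $a$ and $b$, the summed upper bound reorganizes into the jointly conserved total mass, total momentum and total energy, all of which vanish by \eqref{conserv 2}; hence the inter-species dissipation is nonpositive too, and $\frac{d}{dt}H(f_1,f_2)\le0$. Finally, equality forces equality in each strictly convex tangent inequality, i.e. $f_i=\mathcal{M}_{ii}=\mathcal{M}_{ij}$ pointwise, so $f_1,f_2$ are the asserted quantum equilibria, which by construction carry the common $a$ and $b$ dictated by the inter-species equilibria.
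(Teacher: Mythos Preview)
Your plan is correct and follows the paper's proof essentially step for step: the same reduction of \eqref{conserv 2} to the single-variable equation $g_{\tau,\tau'}(c_{12})=N_1/(\cdots)^{3/5}$ via $c_{21}=y(c_{12})$, and the same convexity argument for the $H$-theorem (the paper writes it as $S-I\le0$ with $I=0$, which is exactly your tangent-line inequality $h_i'(f_i)(\mathcal M_{ij}-f_i)\le h_i'(\mathcal M_{ij})(\mathcal M_{ij}-f_i)$ summed over $i$). For the monotonicity of $g_{\tau,\tau'}$, which you rightly flag as the main obstacle, the paper carries out precisely the computation you describe: differentiating through $y(x)$ produces an explicit identity expressing $g'_{\tau,\tau'}(x)$ as a positive multiple of $m_1^3D_\tau(x)+m_1^{3/2}m_2^{3/2}\,\frac{\int_0^\infty(e^{r^2+x}+\tau)^{-1}dr}{\int_0^\infty(e^{r^2+y(x)}+\tau')^{-1}dr}\,D_{\tau'}(y(x))$, where $D_\tau$ is the single-species discriminant whose strict negativity is already known from the one-species theory; the endpoint $\max\{l(\tau),h_\tau^{-1}(\cdots)\}$ is exactly what guarantees $y(x)\ge l(\tau')$ so that $D_{\tau'}(y(x))<0$ applies. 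One small point you omit in Part~(3): for the fermionic entropy $\ln\frac{f_i}{1-f_i}$ to make sense you need $f_i<1$ along the evolution, which the paper proves separately from the mild formulation and the pointwise bound $\mathcal F_{ij}<1$.
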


%
%
%
\subsection{Proof of Theorem \ref{main result QQ} (1), (2)}
The proof for (1) can be found in \cite{MR4064205}. Therefore, we start with the proof of (2).
An explicit computation from $(\ref{conserv 2})_2$ gives 
\begin{align*}
	P_1(x,t)+P_2(x,t)&=\int_{\mathbb{R}^3}\frac{p}{e^{m_1a\big|\frac{p}{m_1}-b\big|^2+c_{12}}+\tau}dp+\int_{\mathbb{R}^3}\frac{p}{e^{m_2a\big|\frac{p}{m_2}-b\big|^2+c_{21}}+\tau'}dp	\cr
	&=\int_{\mathbb{R}^3}\frac{p+m_1b}{e^{a|p|^2+c_{12}}+\tau}dp+\int_{\mathbb{R}^3}\frac{p+m_2b}{e^{a|p|^2+c_{21}}+\tau'}dp \cr
	&= b (m_1N_1(x,t)+m_2N_2(x,t)).
\end{align*}
This gives the explicit presentation of  $b$: 
\begin{align}\label{QQb}
	b(x,t)=\frac{P_1(x,t)+P_2(x,t)}{m_1N_1(x,t)+m_2N_2(x,t)}.
\end{align}
On the other hand, we have from $(\ref{conserv 2})_1$ that:
\begin{align}\label{QQN1}
	\begin{split}
		N_1(x,t)=\int_{\mathbb{R}^3}\frac{1}{e^{m_1a\big|\frac{p}{m_1}-b\big|^2+c_{12}}+\tau}dp	
		&=m_1^{\frac{3}{2}}a^{-\frac{3}{2}}\int_{\mathbb{R}^3}\frac{1}{e^{|p|^2+c_{12}}+\tau}dp, \cr
		N_2(x,t)=\int_{\mathbb{R}^3}\frac{1}{e^{m_2a\big|\frac{p}{m_2}-b\big|^2+c_{21}}+\tau'}dp	
		&=m_2^{\frac{3}{2}}a^{-\frac{3}{2}}\int_{\mathbb{R}^3}\frac{1}{e^{|p|^2+c_{21}}+\tau'}dp,
	\end{split}
\end{align}
and from $(\ref{conserv 2})_3$:
\begin{align}\label{QQE+E}
	\begin{split}
	E_1(x,t)+E_2(x,t)&=\frac{1}{2m_1}\int_{\mathbb{R}^3}\frac{|p|^2}{e^{m_1a\big|\frac{p}{m_1}-b\big|^2+c_{12}}+\tau}dp+\frac{1}{2m_2}\int_{\mathbb{R}^3}\frac{|p|^2}{e^{m_2a\big|\frac{p}{m_2}-b\big|^2+c_{21}}+\tau'}dp	\cr
	&= \frac{1}{2}m_1^{\frac{3}{2}}a^{-\frac{5}{2}}\int_{\mathbb{R}^3}\frac{|p|^2}{e^{|p|^2+c_{12}}+\tau}dp+\frac{1}{2}m_2^{\frac{3}{2}}a^{-\frac{5}{2}}\int_{\mathbb{R}^3}\frac{|p|^2}{e^{|p|^2+c_{21}}+\tau'}dp\cr&+\frac{1}{2}(m_1N_1+m_2N_2)b^2(x,t),
\end{split}
\end{align}
Plugging (\ref{QQb}) into (\ref{QQE+E}), we get
\begin{align}\label{QQE12}
2E_1+2E_2-\frac{|P_1+P_2|^2}{m_1N_1+m_2N_2}&=a^{-\frac{5}{2}}\left(m_1^{\frac{3}{2}}\int_{\mathbb{R}^3}\frac{|p|^2}{e^{|p|^2+c_{12}}+\tau}dp+m_2^{\frac{3}{2}}\int_{\mathbb{R}^3}\frac{|p|^2}{e^{|p|^2+c_{21}}+\tau'}dp\right)
\end{align}
We then deduce from \eqref{QQE12} and $\eqref{QQN1}_1$ that
\begin{align}\label{QQc1}
	\frac{N_1}{\left(2E_1+2E_2-\frac{|P_1+P_2|^2}{m_1N_1+m_2N_2}\right)^{\frac{3}{5}}}= \frac{m_1^{\frac{3}{2}}\int_{\mathbb{R}^3}\frac{1}{e^{|p|^2+c_{12}}+\tau}dp}{{\left(m_1^{\frac{3}{2}}\int_{\mathbb{R}^3}\frac{|p|^2}{e^{|p|^2+c_{12}}+\tau}dp+m_2^{\frac{3}{2}}\int_{\mathbb{R}^3}\frac{|p|^2}{e^{|p|^2+c_{21}}+\tau'}dp\right)^\frac{3}{5}}},
\end{align}
On the other hand, we can factor out $a$ by dividing the two relations in \eqref{QQN1}:
\begin{align}\label{QQN12}
\frac{N_1}{N_2}=\frac{m_1^{\frac{3}{2}}\int_{\mathbb{R}^3}\frac{1}{e^{|p|^2+c_{12}}+\tau}dp}{m_2^{\frac{3}{2}}\int_{\mathbb{R}^3}\frac{1}{e^{|p|^2+c_{21}}+\tau'}dp} = \frac{m_1^{\frac{3}{2}}h_{\tau}(c_{12})}{m_2^{\frac{3}{2}}h_{\tau'}(c_{21})}
\end{align}
and hence:
\begin{align}\label{QQc21}
	c_{21}=h_{\tau'}^{-1}\left(\frac{m_1^{\frac{3}{2}}N_2}{m_2^{\frac{3}{2}}N_1}h_{\tau}(c_{12})\right),
\end{align}
from the monotonicity of $h_{\tau}$.
Now, considering that  $a$ is  obtained from
(\ref{QQE12}) once $c_{12}$ and $c_{21}$ are chosen,
it remains, under the assumption of Theorem \ref{main result QQ}, that \eqref{QQc1} and \eqref{QQN12} uniquely determine $c_{12}$ and $c_{21}$. 
In turn, in view of (\ref{QQc1}) and $(\ref{QQc21})$, we see that $c_{12}$ and $c_{21}$ can be uniquely determined once we prove the monotonicity of $g_{\tau,\tau'}$, which is stated in the following lemma.

\begin{lemma}\label{gmono} Recall the definition of $g_{\tau,\tau'}$ given in (\ref{QQg(x)}):
\begin{align*}
	g_{\tau,\tau'}(x)= \frac{m_1^{\frac{3}{2}}\int_{\mathbb{R}^3}\frac{1}{e^{|p|^2+x}+\tau}dp}{\left(m_1^{\frac{3}{2}}\int_{\mathbb{R}^3}\frac{|p|^2}{e^{|p|^2+x}+\tau}dp+m_2^{\frac{3}{2}}\int_{\mathbb{R}^3}\frac{|p|^2}{e^{|p|^2+y(x)}+\tau'}dp\right)^\frac{3}{5}},
\end{align*}
where
\begin{align}\label{y}
	y(x)=h_{\tau'}^{-1}\left(\frac{m_1^{\frac{3}{2}}N_2}{m_2^{\frac{3}{2}}N_1}h_{\tau}(x)\right),
\end{align}
Then $g_{\tau,\tau'}(x)$ is strictly monotone decreasing function when $x\geq \max\left\{l(\tau), h_{\tau}^{-1}\left(\frac{m_2^{\frac{3}{2}}N_1}{m_1^{\frac{3}{2}}N_2}h_{\tau'}(l(\tau'))\right)\right\} $.
\end{lemma}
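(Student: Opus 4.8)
The plan is to show that $(\ln g_{\tau,\tau'})'(x)<0$ throughout the stated range. Writing $e_\tau(x)=\int_{\mathbb{R}^3}\frac{|p|^2}{e^{|p|^2+x}+\tau}\,dp$, so that
\[
g_{\tau,\tau'}(x)=\frac{m_1^{3/2}h_\tau(x)}{\big(m_1^{3/2}e_\tau(x)+m_2^{3/2}e_{\tau'}(y(x))\big)^{3/5}},
\]
the essential difficulty is that both the numerator and the denominator are \emph{decreasing} in $x$ — the numerator because $h_\tau$ is decreasing, the denominator because $e_\tau$ is decreasing and $y$ is increasing — so the monotonicity of the quotient is not automatic and must be extracted from a quantitative comparison of the two decay rates.

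The computation rests on three ingredients. First, an integration by parts (writing each integral in the radial variable $u=|p|^2$ and using $\partial_x=\partial_u$ on the integrand) gives the clean identity $e_\tau'(x)=-\tfrac32 h_\tau(x)$, together with $h_\tau'(x)<0$. Second, the single-species monotonicity already invoked in part $(1)$: $j_\tau$ is strictly decreasing \cite{MR4064205,MR4096124}, which in view of $e_\tau'=-\tfrac32 h_\tau$ is exactly the inequality
\[
\frac{h_\tau'(x)}{h_\tau(x)}<-\frac{9}{10}\,\frac{h_\tau(x)}{e_\tau(x)}.\qquad(\star)
\]
Third, implicit differentiation of the constraint $m_2^{3/2}N_1 h_{\tau'}(y(x))=m_1^{3/2}N_2 h_\tau(x)$ defining $y$, namely $m_2^{3/2}N_1 h_{\tau'}'(y)\,y'(x)=m_1^{3/2}N_2 h_\tau'(x)$, which produces $y'(x)$ (and shows $y$ is increasing).

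Carrying out the logarithmic differentiation and substituting $e_\tau'=-\tfrac32 h_\tau$, the formula for $y'$, and the constraint itself (to eliminate $N_1,N_2$), the inequality $(\ln g_{\tau,\tau'})'<0$ reduces, after clearing positive factors, to
\[
m_1^{3/2}\Big(h_\tau'\,e_\tau+\tfrac{9}{10}h_\tau^2\Big)+m_2^{3/2}\,h_\tau'\Big(e_{\tau'}(y)+\tfrac{9}{10}\,\frac{h_{\tau'}(y)^2}{h_{\tau'}'(y)}\Big)<0,
\]
where $h_\tau,e_\tau,h_\tau'$ are evaluated at $x$. The first bracket is negative, being exactly $(\star)$ at $x$ multiplied by $e_\tau>0$. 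For the second term one checks, by multiplying through by the negative quantity $h_{\tau'}'(y)$ and invoking $(\star)$ for the species $\tau'$ at the point $y$, that $e_{\tau'}(y)+\tfrac{9}{10}h_{\tau'}(y)^2/h_{\tau'}'(y)>0$; since $h_\tau'(x)<0$ this makes the second term negative as well, and the sum is negative.

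The crux of the argument — and the reason for the precise form of the hypothesis — is the legitimacy of applying $(\star)$ for the species $\tau'$ at the point $y(x)$, which requires $y(x)\ge l(\tau')$, i.e.\ that $y(x)$ lie in the range on which the single-species monotonicity holds. I would verify this directly from the threshold: since $h_\tau$ and $h_{\tau'}$ are strictly decreasing, the assumption $x\ge h_\tau^{-1}\big(\tfrac{m_2^{3/2}N_1}{m_1^{3/2}N_2}h_{\tau'}(l(\tau'))\big)$ gives $\tfrac{m_1^{3/2}N_2}{m_2^{3/2}N_1}h_\tau(x)\le h_{\tau'}(l(\tau'))$, whence $h_{\tau'}(y(x))\le h_{\tau'}(l(\tau'))$ and therefore $y(x)\ge l(\tau')$; the companion assumption $x\ge l(\tau)$ keeps the species-$\tau$ point in its admissible range. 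Thus the maximum appearing in the hypothesis is exactly what places both $x$ and $y(x)$ where $(\star)$ is available, after which the rate comparison above closes the argument. (In the boson case the boundary value $y=l(\tau')=0$ must be approached by a limit, where $h_{\tau'}'(y)\to-\infty$ renders the delicate second term harmless.)
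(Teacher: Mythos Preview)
Your proof is correct and follows essentially the same approach as the paper: both reduce the sign of $g_{\tau,\tau'}'$ to the negativity of two single-species quantities (one at $x$, one at $y(x)$) and invoke the known one-species monotonicity from \cite{MR4064205,MR1751703,MR1861208} after checking that the hypothesis forces $y(x)\ge l(\tau')$. Your inequality $(\star)$ is precisely the paper's $D_\tau(x)<0$ rewritten via $e_\tau'=-\tfrac32 h_\tau$, and your logarithmic-derivative organization yields (after unwinding) exactly the paper's identity \eqref{identity}, only more compactly packaged.
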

\begin{proof}
{\bf Claim :} We claim that the following identity holds: 
\begin{align}\label{identity}
\displaystyle g'_{\tau,\tau'}(x)=8\pi^2  \frac{\displaystyle m_1^3D_{\tau}(x)+m_1^{\frac{3}{2}}m_2^{\frac{3}{2}}\frac{\int_0^{\infty}\frac{1}{e^{r^2+x}+\tau}dr}{\int_0^{\infty}\frac{1}{e^{r^2+y(x)}+\tau'}dr}D_{\tau'}(y(x))}
{\displaystyle\left(m_1^{\frac{3}{2}}\int_{\mathbb{R}^3}\frac{|p|^2}{e^{|p|^2+x}+\tau}dp+m_2^{\frac{3}{2}}\int_{\mathbb{R}^3}\frac{|p|^2}{e^{|p|^2+y(x)}+\tau'}dp\right)^{\frac{8}{5}}}
\end{align}
where
\begin{align*}
	D_{\tau}(x) =  \frac{9}{5}\int_0^{\infty}\frac{r^2}{e^{r^2+x}+\tau}dr\int_{0}^{\infty}\frac{r^2}{e^{r^2+x}+\tau}dr - \int_0^{\infty}\frac{r^4}{e^{r^2+x}+\tau}dr\int_0^{\infty}\frac{1}{e^{r^2+x}+\tau}dr.
\end{align*}

\noindent$\bullet$ {\bf Proof of (\ref{identity}):}  By an explicit computation, we have 
\begin{align*}
	\frac{\partial g(x)}{\partial x}  &= {\left(m_1^{\frac{3}{2}}\int_{\mathbb{R}^3}\frac{|p|^2}{e^{|p|^2+x}+\tau}dp+m_2^{\frac{3}{2}}\int_{\mathbb{R}^3}\frac{|p|^2}{e^{|p|^2+y(x)}+\tau'}dp\right)^{-\frac{6}{5}}} \cr
	&\times \bigg[ {\left(m_1^{\frac{3}{2}}\int_{\mathbb{R}^3}\frac{|p|^2}{e^{|p|^2+x}+\tau}dp+m_2^{\frac{3}{2}}\int_{\mathbb{R}^3}\frac{|p|^2}{e^{|p|^2+y(x)}+\tau'}dp\right)^\frac{3}{5}}m_1^{\frac{3}{2}}\partial_{x}\int_{\mathbb{R}^3}\frac{1}{e^{|p|^2+x}+\tau}dp\cr
	&-\frac{3}{5}\left(m_1^{\frac{3}{2}}\int_{\mathbb{R}^3}\frac{|p|^2}{e^{|p|^2+x}+\tau}dp+m_2^{\frac{3}{2}}\int_{\mathbb{R}^3}\frac{|p|^2}{e^{|p|^2+y(x)}+\tau'}dp\right)^{-\frac{2}{5}}\cr
	&\times \partial_x\left(m_1^{\frac{3}{2}}\int_{\mathbb{R}^3}\frac{|p|^2}{e^{|p|^2+x}+\tau}dp+m_2^{\frac{3}{2}}\int_{\mathbb{R}^3}\frac{|p|^2}{e^{|p|^2+y(x)}+\tau'}dp\right)  m_1^{\frac{3}{2}}\int_{\mathbb{R}^3}\frac{1}{e^{|p|^2+x}+\tau}dp \bigg].
\end{align*}
We then multiply $2/5$ power of \[
m_1^{\frac{3}{2}}\int_{\mathbb{R}^3}\frac{|p|^2}{e^{|p|^2+x}+\tau}dp+m_2^{\frac{3}{2}}\int_{\mathbb{R}^3}\frac{|p|^2}{e^{|p|^2+y(x)}+\tau'}dp\] 
on numerator and denominator:
\begin{align*}
	\frac{\partial g(x)}{\partial x}  &= {\left(m_1^{\frac{3}{2}}\int_{\mathbb{R}^3}\frac{|p|^2}{e^{|p|^2+x}+\tau}dp+m_2^{\frac{3}{2}}\int_{\mathbb{R}^3}\frac{|p|^2}{e^{|p|^2+y(x)}+\tau'}dp\right)^{-\frac{8}{5}}} \cr
	&\times \bigg[ \left(m_1^{\frac{3}{2}}\int_{\mathbb{R}^3}\frac{|p|^2}{e^{|p|^2+x}+\tau}dp+m_2^{\frac{3}{2}}\int_{\mathbb{R}^3}\frac{|p|^2}{e^{|p|^2+y(x)}+\tau'}dp\right)m_1^{\frac{3}{2}}\partial_{x}\int_{\mathbb{R}^3}\frac{1}{e^{|p|^2+x}+\tau}dp\cr
	&-\frac{3}{5}\partial_x\left(m_1^{\frac{3}{2}}\int_{\mathbb{R}^3}\frac{|p|^2}{e^{|p|^2+x}+\tau}dp+m_2^{\frac{3}{2}}\int_{\mathbb{R}^3}\frac{|p|^2}{e^{|p|^2+y(x)}+\tau'}dp\right)m_1^{\frac{3}{2}}\int_{\mathbb{R}^3}\frac{1}{e^{|p|^2+x}+\tau}dp \bigg].
\end{align*}
We then set the denominator to be $I$ to write
\begin{align*}
	\frac{\partial g(x)}{\partial x}  &= {\left(m_1^{\frac{3}{2}}\int_{\mathbb{R}^3}\frac{|p|^2}{e^{|p|^2+x}+\tau}dp+m_2^{\frac{3}{2}}\int_{\mathbb{R}^3}\frac{|p|^2}{e^{|p|^2+y(x)}+\tau'}dp\right)^{-\frac{8}{5}}} \times I,
\end{align*}
where
\begin{align*}
	I&= \left(m_1^{\frac{3}{2}}\int_{\mathbb{R}^3}\frac{|p|^2}{e^{|p|^2+x}+\tau}dp+m_2^{\frac{3}{2}}\int_{\mathbb{R}^3}\frac{|p|^2}{e^{|p|^2+y(x)}+\tau'}dp\right)m_1^{\frac{3}{2}}\partial_{x}\int_{\mathbb{R}^3}\frac{1}{e^{|p|^2+x}+\tau}dp\cr
	&-\frac{3}{5}\partial_x\left(m_1^{\frac{3}{2}}\int_{\mathbb{R}^3}\frac{|p|^2}{e^{|p|^2+x}+\tau}dp+m_2^{\frac{3}{2}}\int_{\mathbb{R}^3}\frac{|p|^2}{e^{|p|^2+y(x)}+\tau'}dp\right)m_1^{\frac{3}{2}}\int_{\mathbb{R}^3}\frac{1}{e^{|p|^2+x}+\tau}dp.
\end{align*}
We then carry out the following two integrations
\begin{align}\label{int1}
	\begin{split}
	\partial_{x}\int_{\mathbb{R}^3}\frac{1}{e^{|p|^2+x}+\tau}dp&= \int_{\mathbb{R}^3}\frac{-e^{|p|^2+x}}{(e^{|p|^2+x}+\tau)^2}dp\cr
	& = 4\pi \int_{0}^{\infty}\frac{-r^2e^{r^2+x}}{(e^{r^2+x}+\tau)^2}dr \cr
	&= -2\pi\int_0^{\infty}\frac{1}{e^{r^2+x}+\tau}dr
	\end{split}
\end{align}
where we used the following integration by parts : $u'=\frac{2re^{r^2+x}}{(e^{r^2+x}+\tau)^2}$, $v=\frac{1}{2}r$,
and
\begin{align}\label{int2}
	\begin{split}
	\partial_x&\left(m_1^{\frac{3}{2}}\int_{\mathbb{R}^3}\frac{|p|^2}{e^{|p|^2+x}+\tau}dp+m_2^{\frac{3}{2}}\int_{\mathbb{R}^3}\frac{|p|^2}{e^{|p|^2+y(x)}+\tau'}dp\right)\cr
	&=m_1^{\frac{3}{2}}\int_{\mathbb{R}^3}\frac{-|p|^2e^{|p|^2+x}}{(e^{|p|^2+x}+\tau)^2}dp+m_2^{\frac{3}{2}}\frac{\partial y(x)}{\partial x}\int_{\mathbb{R}^3}\frac{-|p|^2e^{|p|^2+y(x)}}{(e^{|p|^2+y(x)}+\tau')^2}dp \cr
	&= 4\pi m_1^{\frac{3}{2}} \int_{0}^{\infty}\frac{-r^4e^{r^2+x}}{(e^{r^2+x}+\tau)^2}dr+4\pi m_2^{\frac{3}{2}}\frac{\partial y(x)}{\partial x} \int_{0}^{\infty}\frac{-r^4e^{r^2+y(x)}}{(e^{r^2+y(x)}+\tau')^2}dr \cr
	&= -6\pi m_1^{\frac{3}{2}} \int_0^{\infty}\frac{r^2}{e^{r^2+x}+\tau}dr-6\pi m_2^{\frac{3}{2}}\frac{\partial y(x)}{\partial x}\int_0^{\infty}\frac{r^2}{e^{r^2+y(x)}+\tau'}dr,
\end{split}
\end{align}
where we used similar integration by parts : $u'=\frac{2re^{r^2+c}}{(e^{r^2+c}+\tau)^2}$, $v=\frac{1}{2}r^3$ for
\begin{align*}
	\int_0^{\infty}\frac{r^4e^{r^2+c}}{(e^{r^2+c}+\tau)^2}dr=\frac{3}{2}\int_0^{\infty}\frac{r^2}{e^{r^2+c}+\tau}dr.
\end{align*}
Using (\ref{int1}) and (\ref{int2}), we rewrite $I$ as
\begin{align}\label{re}
	\begin{split}
	I&= -8\pi^2\left(m_1^{\frac{3}{2}}\int_0^{\infty}\frac{r^4}{e^{r^2+x}+\tau}dr+m_2^{\frac{3}{2}}\int_0^{\infty}\frac{r^4}{e^{r^2+y(x)}+\tau'}dr\right)m_1^{\frac{3}{2}}\int_0^{\infty}\frac{1}{e^{r^2+x}+\tau}dr\cr
	&+\frac{72\pi^2}{5}\left(m_1^{\frac{3}{2}}\int_0^{\infty}\frac{r^2}{e^{r^2+x}+\tau}dr+m_2^{\frac{3}{2}}\frac{\partial y(x)}{\partial x}\int_0^{\infty}\frac{r^2}{e^{r^2+y(x)}+\tau'}dr\right)m_1^{\frac{3}{2}}\int_{0}^{\infty}\frac{r^2}{e^{r^2+x}+\tau}dr 
\end{split}
\end{align}
We then recall 
\begin{align*}
	D_{\tau}(x)=-\int_0^{\infty}\frac{r^4}{e^{r^2+x}+\tau}dr\int_0^{\infty}\frac{1}{e^{r^2+x}+\tau}dr+\frac{9}{5}\int_0^{\infty}\frac{r^2}{e^{r^2+x}+\tau}dr\int_{0}^{\infty}\frac{r^2}{e^{r^2+x}+\tau}dr <0,
\end{align*}
and express (\ref{re}) as follows:
So subtracting $D_{\tau}(x)$ on each sides gives 
\begin{align}\label{turn back}
	\begin{split}
	\frac{I}{8\pi^2}-m_1^3D_{\tau}(x)&= -m_1^{\frac{3}{2}}m_2^{\frac{3}{2}}\int_0^{\infty}\frac{r^4}{e^{r^2+y(x)}+\tau'}dr\int_0^{\infty}\frac{1}{e^{r^2+x}+\tau}dr\cr
	&+\frac{9}{5}m_1^{\frac{3}{2}}m_2^{\frac{3}{2}}\frac{\partial y(x)}{\partial x}\int_0^{\infty}\frac{r^2}{e^{r^2+y(x)}+\tau'}dr\int_{0}^{\infty}\frac{r^2}{e^{r^2+x}+\tau}dr .
\end{split}
\end{align}
Now we compute $\partial y(x)/\partial x$. Recall 
\begin{align*}
y(x)=h_{\tau'}^{-1}\left(\frac{m_1^{\frac{3}{2}}N_2}{m_2^{\frac{3}{2}}N_1}h_{\tau}(x)\right),
\end{align*}
and compute
\begin{align*}
	\frac{d y(x)}{d x} &= (h_{\tau'}^{-1})'\left(\frac{m_1^{\frac{3}{2}}N_2}{m_2^{\frac{3}{2}}N_1}h_{\tau}(x)\right) \times \frac{d }{d x}\frac{m_1^{\frac{3}{2}}N_2}{m_2^{\frac{3}{2}}N_1}h_{\tau}(x).
\end{align*}
Then, since the differentiation rule for inverse function gives
\begin{align*}
	(h_{\tau'}^{-1})'\left(\frac{m_1^{\frac{3}{2}}N_2}{m_2^{\frac{3}{2}}N_1}h_{\tau}(x)\right) = \frac{1}{h_{\tau'}'(y(x))},
\end{align*}
we get
\begin{align*}
	\frac{d y(x)}{d x} &= \frac{m_1^{\frac{3}{2}}N_2}{m_2^{\frac{3}{2}}N_1}\frac{h_{\tau}'(x)}{h_{\tau'}'(y(x))}.
\end{align*}
Finally, we use
\begin{align*}
	h_{\tau}'(x) = \int_{\mathbb{R}^3}\frac{-e^{|p|^2+x}}{(e^{|p|^2+x}+\tau)^2}dp =4\pi \int_{0}^{\infty}\frac{-r^2e^{r^2+x}}{(e^{r^2+x}+\tau)^2}dr =-2\pi \int_0^{\infty}\frac{1}{e^{r^2+x}+\tau}dr,
\end{align*}
to obtain the following expressions for $\partial y/\partial x$:
\begin{align*}
	\frac{\partial y(x)}{\partial x} &= \frac{m_1^{\frac{3}{2}}N_2}{m_2^{\frac{3}{2}}N_1}\frac{\int_0^{\infty}\frac{1}{e^{r^2+x}+\tau}dr}{\int_0^{\infty}\frac{1}{e^{r^2+y(x)}+\tau'}dr}. 
\end{align*}
Inserting this into (\ref{turn back})
\begin{align*}
\frac{I}{8\pi^2}-m_1^3D_{\tau}(x)&= -m_1^{\frac{3}{2}}m_2^{\frac{3}{2}}\int_0^{\infty}\frac{r^4}{e^{r^2+y(x)}+\tau'}dr\int_0^{\infty}\frac{1}{e^{r^2+x}+\tau}dr\cr
&+\frac{9}{5}m_1^3\frac{N_2}{N_1}\frac{\int_0^{\infty}\frac{1}{e^{r^2+x}+\tau}dr}{\int_0^{\infty}\frac{1}{e^{r^2+y(x)}+\tau'}dr} \int_0^{\infty}\frac{r^2}{e^{r^2+y(x)}+\tau'}dr\int_{0}^{\infty}\frac{r^2}{e^{r^2+x}+\tau}dr \cr
&=-m_1^{\frac{3}{2}}\int_0^{\infty}\frac{1}{e^{r^2+x}+\tau}dr\bigg(m_2^{\frac{3}{2}}\int_0^{\infty}\frac{r^4}{e^{r^2+y(x)}+\tau'}dr\cr
&\quad -\frac{9}{5}m_1^{\frac{3}{2}}\frac{N_2}{N_1}\frac{\int_0^{\infty}\frac{r^2}{e^{r^2+y(x)}+\tau'}dr\int_{0}^{\infty}\frac{r^2}{e^{r^2+x}+\tau}dr}{\int_0^{\infty}\frac{1}{e^{r^2+y(x)}+\tau'}dr}  \bigg)
\end{align*}
Finally, we use
\begin{align*}
	\frac{N_2}{N_1}=\frac{m_2^{\frac{3}{2}}h_{\tau'}(y(x))}{m_1^{\frac{3}{2}}h_{\tau}(x)}=\frac{m_2^{\frac{3}{2}}\int_{\mathbb{R}^3}\frac{1}{e^{|p|^2+y(x)}+\tau'}dp}{m_1^{\frac{3}{2}}\int_{\mathbb{R}^3}\frac{1}{e^{|p|^2+x}+\tau}dp} =\frac{m_2^{\frac{3}{2}}\int_0^{\infty}\frac{r^2}{e^{r^2+y(x)}+\tau'}dr}{m_1^{\frac{3}{2}}\int_0^{\infty}\frac{r^2}{e^{r^2+x}+\tau}dr}
\end{align*}
to derive
\begin{align*}
&\frac{I}{8\pi^2}-m_1^3D_{\tau}(x)\cr
&\hspace{0.6cm}=-m_1^{\frac{3}{2}}m_2^{\frac{3}{2}}\int_0^{\infty}\frac{1}{e^{r^2+x}+\tau}dr \left(\int_0^{\infty}\frac{r^4}{e^{r^2+y(x)}+\tau'}dr-\frac{9}{5}\frac{\int_0^{\infty}\frac{r^2}{e^{r^2+y(x)}+\tau'}dr\int_{0}^{\infty}\frac{r^2}{e^{r^2+y(x)}+\tau'}dr}{\int_0^{\infty}\frac{1}{e^{r^2+y(x)}+\tau'}dr}  \right) \cr
&\hspace{0.6cm}=m_1^{\frac{3}{2}}m_2^{\frac{3}{2}}\frac{\int_0^{\infty}\frac{1}{e^{r^2+x}+\tau}dr}{\int_0^{\infty}\frac{1}{e^{r^2+y(x)}+\tau'}dr} \cr
&\hspace{0.6cm}\times  \left(\frac{9}{5}\int_0^{\infty}\frac{r^2}{e^{r^2+y(x)}+\tau'}dr\int_{0}^{\infty}\frac{r^2}{e^{r^2+y(x)}+\tau'}dr - \int_0^{\infty}\frac{r^4}{e^{r^2+y(x)}+\tau'}dr\int_0^{\infty}\frac{1}{e^{r^2+y(x)}+\tau'}dr\right)\cr
&\hspace{0.6cm}=m_1^{\frac{3}{2}}m_2^{\frac{3}{2}}\frac{\int_0^{\infty}\frac{1}{e^{r^2+x}+\tau}dr}{\int_0^{\infty}\frac{1}{e^{r^2+y(x)}+\tau'}dr} 
D_{\tau'}(y(x)),
\end{align*}
which complete the proof of the claim.\newline

\noindent$\bullet$ {\bf Proof of the Lemma \ref{gmono}:} Assume (\ref{identity}) holds. We first observe that $h(x)$ is strictly decreasing function on $x\in[0,\infty)$ for $\tau=-1$ and $x\in(-\infty,\infty)$ for $\tau=+1$ :
\begin{align*}
h_{\tau}'(x)=-\int_{\mathbb{R}^3}\frac{e^{|p|^2+x}}{(e^{|p|^2+x}+\tau)^2}dp < 0.
\end{align*}
Therefore, our restriction on $x$: $x\geq  h_{\tau}^{-1}\left(\frac{m_2^{\frac{3}{2}}N_1}{m_1^{\frac{3}{2}}N_2}h_{\tau'}(l(\tau'))\right)$ combined with the definition of $y$ given in  \eqref{y}, leads to 
\begin{align*}
y(x)\equiv h_{\tau'}^{-1}\left(\frac{m_1^{\frac{3}{2}}N_2}{m_2^{\frac{3}{2}}N_1}h_{\tau}(x)\right)
\geq h_{\tau'}^{-1}\left(\frac{m_1^{\frac{3}{2}}N_2}{m_2^{\frac{3}{2}}N_1}h_{\tau}\left(h_{\tau}^{-1}\left(\frac{m_2^{\frac{3}{2}}N_1}{m_1^{\frac{3}{2}}N_2}h_{\tau'}(l(\tau'))\right)\right)\right)
= l(\tau').
\end{align*}
Thus, we have
\[
y(x)\geq l(\tau') .
\]
On the other hand, from the assumption, $x$ satisfies 
\[
x\geq l(\tau).
\]
Therefore, we have
\begin{align*}
D_{\tau}(x)<0	\quad\mbox{and}\quad D_{\tau'}(y(x))<0,
\end{align*}
since we already know 
\begin{align*}
D_{+1}(x)&<0 \mbox{ on } x\in (-\infty,\infty), \qquad
D_{-1}(x)<0 \mbox{ on } x\in [0,\infty).
\end{align*}
(See \cite{MR1751703} for boson case ($+1$) and \cite{MR4064205,MR1861208} for fermion case ($-1$)).  Inserting this into (\ref{identity}), we can conclude the proof of the Lemma.\newline 
\end{proof}

\subsection{Proof of Theorem \ref{main result QQ} (3)}It remains to prove the $H$-theorem to conclude Theorem \ref{main result QQ} (3). 
\begin{proposition} Let $f_i\leq 1$ only when $f_i$ is the distribution function for fermion components, then we have
\begin{align*}
\int_{\mathbb{R}^3}\ln\frac{f_1}{1-\tau f_1} \big\{ (\mathcal{M}_{11}-f_1)+(\mathcal{M}_{12}-f_1)\big\}+\ln\frac{f_2}{1-\tau' f_2}\big\{(\mathcal{M}_{22}-f_2)+(\mathcal{M}_{21}-f_2)\big\} dp \leq 0.
\end{align*}
\end{proposition}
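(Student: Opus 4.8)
The plan is to use the two structural facts behind every BGK $H$-theorem: the map $s\mapsto \ln\frac{s}{1-\tau s}$ is strictly increasing on the admissible range of the distributions, and the ``log-equilibrium'' $\ln\frac{\mathcal{M}_{ij}}{1-\tau\mathcal{M}_{ij}}$ is an affine combination of the collision invariants $1,p,|p|^2$. Writing $\phi_\tau(s)=\ln\frac{s}{1-\tau s}$, a direct computation gives $\phi_\tau'(s)=\frac{1}{s}+\frac{\tau}{1-\tau s}$, which is positive on $(0,1)$ when $\tau=+1$ (the fermion case, where the hypothesis $f_i\le 1$ guarantees $f_i\in(0,1)$) and on $(0,\infty)$ when $\tau=-1$ (the boson case). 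Hence for any admissible $f$ and any one of the equilibria $\mathcal{M}$,
\begin{align*}
\big(\phi_\tau(f)-\phi_\tau(\mathcal{M})\big)\big(\mathcal{M}-f\big)\le 0 \qquad \text{pointwise in } p,
\end{align*}
since the two factors have opposite signs. Applying this bound to each of the four terms in the statement --- comparing $f_1$ with $\mathcal{M}_{11}$ and with $\mathcal{M}_{12}$, and $f_2$ with $\mathcal{M}_{22}$ and with $\mathcal{M}_{21}$ --- reduces the proposition to proving that the same expression with every $\phi_\tau(f_i)$ replaced by $\phi_\tau(\mathcal{M}_{ij})$ vanishes.

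Second, I would compute the log-equilibrium explicitly. Since each $\mathcal{M}_{ij}=(e^{E_{ij}}+\tau)^{-1}$ with $E_{11}=m_1a_1|\frac{p}{m_1}-b_1|^2+c_1$, $E_{12}=m_1a|\frac{p}{m_1}-b|^2+c_{12}$, and so on, one has $1-\tau\mathcal{M}_{ij}=e^{E_{ij}}/(e^{E_{ij}}+\tau)$ and therefore $\phi_\tau(\mathcal{M}_{ij})=-E_{ij}$, an affine function of $1,p,|p|^2/2m_i$. For the two same-species terms this immediately gives the desired vanishing: $\phi_\tau(\mathcal{M}_{11})$ is a linear combination of the invariants, and the single-species relations \eqref{conserv 1} force $\mathcal{M}_{11}-f_1$ to integrate to zero against $1$, $p$, and $\frac{|p|^2}{2m_1}$, so that $\int_{\mathbb{R}^3}\phi_\tau(\mathcal{M}_{11})(\mathcal{M}_{11}-f_1)\,dp=0$, and likewise for the $\mathcal{M}_{22}$ term.

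The main obstacle is the cross-species pair, because $\int_{\mathbb{R}^3}\phi_\tau(\mathcal{M}_{12})(\mathcal{M}_{12}-f_1)\,dp$ does \emph{not} vanish by itself: the mixture relations \eqref{conserv 2} only control the \emph{combined} momentum and energy of the two species, not those of each species separately. The resolution is to add the $12$ and $21$ contributions and to invoke the fact, established in part $(2)$, that $\mathcal{M}_{12}$ and $\mathcal{M}_{21}$ share the same $a$ and $b$. Expanding $m_ia|\frac{p}{m_i}-b|^2=\frac{a}{m_i}|p|^2-2a\,p\cdot b+m_ia|b|^2$ and summing, the integrand $\phi_\tau(\mathcal{M}_{12})(\mathcal{M}_{12}-f_1)+\phi_{\tau'}(\mathcal{M}_{21})(\mathcal{M}_{21}-f_2)$ organizes into three groups: the $|p|^2$-group is $-2a\big[\frac{|p|^2}{2m_1}(\mathcal{M}_{12}-f_1)+\frac{|p|^2}{2m_2}(\mathcal{M}_{21}-f_2)\big]$, which integrates to zero by the energy identity in \eqref{conserv 2}; the $p$-group is $2a\,b\cdot\big[p(\mathcal{M}_{12}-f_1)+p(\mathcal{M}_{21}-f_2)\big]$, which integrates to zero by the momentum identity in \eqref{conserv 2}; and the remaining constants multiply $\mathcal{M}_{12}-f_1$ and $\mathcal{M}_{21}-f_2$ separately, each integrating to zero by the mass identities in \eqref{conserv 2}. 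Hence the cross sum vanishes, and combining it with the same-species identities of the previous step gives the claimed inequality. It is worth stressing that this grouping is the only place where the coupled structure of the model genuinely enters, and that it succeeds precisely because the two inter-species equilibria are built on a common $a$ and $b$.
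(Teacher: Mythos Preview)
Your proof is correct and follows essentially the same route as the paper: split each term as $\phi_\tau(f_i)(\mathcal{M}_{ij}-f_i)=\big(\phi_\tau(f_i)-\phi_\tau(\mathcal{M}_{ij})\big)(\mathcal{M}_{ij}-f_i)+\phi_\tau(\mathcal{M}_{ij})(\mathcal{M}_{ij}-f_i)$, use the monotonicity of $\phi_\tau$ for the first piece, and kill the second via the conservation relations \eqref{conserv 1}, \eqref{conserv 2} after observing that $\phi_\tau(\mathcal{M}_{ij})$ is affine in the collision invariants. The paper cites a reference for the same-species terms and writes out only the cross pair (your $12+21$ grouping is exactly its computation of $I=0$), whereas you treat all four terms uniformly, but the argument is the same.
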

\begin{proof}
The proof for
\begin{align}
\int_{\mathbb{R}^3}\ln\frac{f_1}{1-f_1}(\mathcal{M}_{11}-f_1)dp + \int_{\mathbb{R}^3}\ln\frac{f_2}{1-f_2}(\mathcal{M}_{22}-f_2)dp \leq 0,
\end{align}
can be found in \cite{MR2923847}. So we only prove
\begin{align*}
	S\equiv\int_{\mathbb{R}^3}\ln\frac{f_1}{1-\tau f_1}(\mathcal{M}_{12}-f_1)dp + \int_{\mathbb{R}^3}\ln\frac{f_2}{1-\tau' f_2}(\mathcal{M}_{21}-f_2)dp \leq0.
\end{align*}
First,  we observe that 
\begin{align*}
	I=\int_{\mathbb{R}^3}\ln\frac{\mathcal{M}_{12}}{1-\tau \mathcal{M}_{12}}(\mathcal{M}_{12}-f_1)dp + \int_{\mathbb{R}^3}\ln\frac{\mathcal{M}_{21}}{1-\tau' \mathcal{M}_{21}}(\mathcal{M}_{21}-f_2)dp =0,
\end{align*}
which follows from an explicit computation using the conservation laws \eqref{conserv 2}: 
\begin{align*}	I&=-\int_{\mathbb{R}^3}\left(am_1\bigg|\frac{p}{m_1}-b\bigg|^2+c_{12}\right)(\mathcal{M}_{12}-f_1)dp - \int_{\mathbb{R}^3}\left(am_2\bigg|\frac{p}{m_2}-b\bigg|^2+c_{21}\right)(\mathcal{M}_{21}-f_2)dp \cr
&=a \int_{\mathbb{R}^3} \left(\frac{|p|^2}{m_1}f_1+\frac{|p|^2}{m_2}f_2-\frac{|p|^2}{m_1}\mathcal{M}_{12}-\frac{|p|^2}{m_2}\mathcal{M}_{21}\right)dp - 2ab\cdot \int_{\mathbb{R}^3}p \left(f_1+f_2-\mathcal{M}_{12}-\mathcal{M}_{21}\right)dp \cr
	&=0.
\end{align*}
From this, we find
\begin{align*}
	S-I &= \int_{\mathbb{R}^3}\left( \ln\frac{f_1}{1-\tau f_1}-\ln\frac{\mathcal{M}_{12}}{1-\tau \mathcal{M}_{12}}\right) (\mathcal{M}_{12}-f_1)dp \cr
	&+ \int_{\mathbb{R}^3}\left(\ln\frac{f_2}{1-\tau'f_2}-\ln\frac{\mathcal{M}_{21}}{1-\tau'\mathcal{M}_{21}}\right)(\mathcal{M}_{21}-f_2)dp \leq 0,
\end{align*}
since $\ln \frac{x}{1+x}$ is an increasing function for $x\in[0,\infty)$, and $\ln \frac{x}{1-x}$ is an increasing function when $0<x<1$. Here, we have equality if and only if $f_1=M_{12}$ and $f_2=M_{21}$.
This completes the proof.
\end{proof}
\begin{remark}
The equality in the $H$-Theorem is characterized by two distributions with the same value for $a$ and $b$.  Due to the fact that $b$ is equal to pressure over the density, this leads to $P_1=\frac{N_1}{N_2} P_2$.
\end{remark}
Therefore, to complete the proof of Theorem \ref{main result QQ} (3), it remains to prove that $f_i<1$ in the case of fermions. 
\begin{lemma} Let $f_i$ be a distribution function for fermions  and $f_{i}(x,p,0) < 1$. Then we have $f_i(x,p,t)<1$ for $t\geq0$.
\end{lemma}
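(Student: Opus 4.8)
The plan is to use the method of characteristics to reduce the transport equation for a fermion component to a scalar ODE along each characteristic, and then to exploit the fact that every Fermi--Dirac distribution is pointwise strictly less than $1$ in order to propagate the bound $f_i<1$ via a Duhamel (integrating-factor) representation.

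First I would record the precise equation satisfied by a fermion component. Whether the partner species is a fermion or a boson, the two relaxation terms acting on a fermion $f_i$ are both of Fermi--Dirac type, so for $j\neq i$,
\begin{equation*}
\partial_t f_i+\frac{p}{m_i}\cdot\nabla_x f_i=(\mathcal{F}_{ii}-f_i)+(\mathcal{F}_{ij}-f_i)=\mathcal{F}_{ii}+\mathcal{F}_{ij}-2f_i .
\end{equation*}
Fixing a phase point $(x,p,t)$ and writing $F(s)=f_i\big(x+\tfrac{p}{m_i}(s-t),p,s\big)$ for the value of $f_i$ along the straight characteristic, the transport operator becomes $d/ds$ and $F$ solves the linear ODE
\begin{equation*}
\frac{dF}{ds}+2F=\Phi(s),\qquad \Phi(s):=\big(\mathcal{F}_{ii}+\mathcal{F}_{ij}\big)\ \text{evaluated along the characteristic}.
\end{equation*}

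The crucial observation is that each Fermi--Dirac distribution has the form $1/(e^{y}+1)$ with $y$ real and finite, hence is strictly less than $1$; consequently $0\le\Phi(s)<2$ for every $s$, with no additional hypotheses required. I would then solve the ODE with the integrating factor $e^{2s}$ to obtain the representation
\begin{equation*}
F(t)=e^{-2t}F(0)+\int_{0}^{t}e^{-2(t-\sigma)}\Phi(\sigma)\,d\sigma .
\end{equation*}
Bounding $\Phi<2$ and using $\int_{0}^{t}e^{-2(t-\sigma)}\cdot 2\,d\sigma=1-e^{-2t}$ gives
\begin{equation*}
F(t)<e^{-2t}F(0)+\big(1-e^{-2t}\big)=1-e^{-2t}\big(1-F(0)\big),
\end{equation*}
and since $F(0)=f_i\big(x-\tfrac{p}{m_i}t,p,0\big)<1$ by hypothesis, the right-hand side is strictly below $1$. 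As $(x,p,t)$ was arbitrary, this yields $f_i(x,p,t)<1$ for all $t\ge0$.

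The main point requiring care is not the inequality itself but the regularity needed to justify the Duhamel formula: $\mathcal{F}_{ii}$ and $\mathcal{F}_{ij}$ are not prescribed data but depend on the solution through its moments $N_i,P_i,E_i$, so along a characteristic $\Phi$ is a priori only a measurable (indeed continuous, for sufficiently regular solutions) function of $s$. I would therefore phrase the argument at the level of mild solutions, for which $F$ is by definition given by the displayed representation; one must also ensure that the moments remain finite so that the coefficients $a,b,c$ stay finite and the equilibria are well-defined, which is precisely what Theorem~\ref{main result QQ}(1)--(2) guarantees. Granting this, the pointwise bound $\mathcal{F}<1$, which holds unconditionally, is all that the estimate needs, and the computation above closes the proof.
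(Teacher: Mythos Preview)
Your argument is correct and follows essentially the same route as the paper: write the mild (Duhamel) formula along characteristics, use the pointwise bound $\mathcal{F}_{ii},\mathcal{F}_{ij}<1$ so that the source is $<2$, and integrate to get $e^{-2t}f_i(0)+(1-e^{-2t})<1$. Your version is in fact slightly more careful (using the correct characteristic speed $p/m_i$ and spelling out the final inequality), and your remark about needing well-defined equilibrium coefficients is a legitimate caveat the paper omits.
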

\begin{proof} 
Integrating \eqref{MQBGK} along the characteristic, we get the mild form :  
\begin{align*}
	f_i(x,p,t)&=e^{-2t}f_i(x-pt,p,0)+\int_0^t e^{2(\tau-t)}(\mathcal{F}_{ii}+\mathcal{F}_{ij})(x+(\tau-t) p,p,\tau)d\tau ,
\end{align*}
for $j\neq i $. Since $\mathcal{F}_{ii}<1$ and $\mathcal{F}_{ij}<1$ for all $(x,p,t)$ by definition, we have 
\begin{align*}
	f_i(x,p,t)&\leq e^{-2t}f_i(x-pt,p,0)+\int_0^t 2e^{2(\tau-t)}d\tau \cr
	&= e^{-2t}f_i(x-pt,p,0)+(1-e^{-2t})\cr
	&<1.\cr
\end{align*}
\end{proof}

\section{Appendix}
In this section, we present a proof for \eqref{cancellation} for readers' convenience. The proof is standard but we couldn't locate them in the literature. We also present the relation between the consevation laws w.r.t the momentum distribution function $f(x,p,t)$ and the conservation laws w.r.t the velocitiy distribution function $\bar{f}(x,v,t)$. We start with the computation of Jacobian:
\begin{lemma}\label{J} The Jacobian of the change of variables $(p,p_*) \leftrightarrow (p',p_*')$ is 
	\begin{align*}
		\det J = \det \frac{\partial(p',p_*')}{\partial(p,p_*)} =  -1.
	\end{align*}	
\end{lemma}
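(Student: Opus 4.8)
The plan is to differentiate the explicit collision formulas for fixed $w\in\mathbb{S}^2$ and exploit the rank-one structure of the resulting Jacobian. Writing $\mu = \frac{2m_im_j}{m_i+m_j}$ and setting $s = \left(\frac{p}{m_i}-\frac{p_*}{m_j}\right)\cdot w$, the collision rules read $p' = p - \mu s\, w$ and $p_*' = p_* + \mu s\, w$. Since $\partial s/\partial p = w/m_i$ and $\partial s/\partial p_* = -w/m_j$ (viewed as row vectors), differentiating gives the $6\times 6$ block matrix
\[
J = \begin{pmatrix} I - \frac{\mu}{m_i}\,w w^{\top} & \frac{\mu}{m_j}\,w w^{\top} \\[2mm] \frac{\mu}{m_i}\,w w^{\top} & I - \frac{\mu}{m_j}\,w w^{\top}\end{pmatrix},
\]
where each entry is a $3\times 3$ block and $w w^{\top}$ denotes the orthogonal projection onto the line spanned by $w$.

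First I would diagonalize by choosing an orthonormal basis $\{w, e_1, e_2\}$ of $\mathbb{R}^3$ with $e_1, e_2 \perp w$. In this basis $w w^{\top} = \mathrm{diag}(1,0,0)$, so that $J$ block-diagonalizes: grouping the $p$- and $p_*$-components along each basis direction produces three $2\times 2$ blocks. Along $e_1$ and $e_2$ the projection vanishes and each block is the $2\times 2$ identity, contributing determinant $1$ each.

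The only nontrivial contribution comes from the $w$-direction, where the relevant $2\times 2$ block is $\begin{pmatrix} 1-\mu/m_i & \mu/m_j \\ \mu/m_i & 1-\mu/m_j \end{pmatrix}$. Its determinant is $1 - \mu\left(\frac{1}{m_i}+\frac{1}{m_j}\right)$, the cross terms $\pm\,\mu^2/(m_im_j)$ having cancelled. Substituting $\mu = 2m_im_j/(m_i+m_j)$ yields $\mu(1/m_i + 1/m_j) = 2$, so this block has determinant $-1$, and hence $\det J = (-1)\cdot 1 \cdot 1 = -1$.

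Since everything reduces to elementary linear algebra, there is no serious obstacle; the only point requiring care is recognizing that $w w^{\top}$ is a rank-one projection, so that $J$ splits along $\mathrm{span}\{w\}$ and its orthogonal complement, reducing the $6\times 6$ determinant to a single nontrivial $2\times 2$ determinant.
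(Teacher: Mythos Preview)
Your proof is correct and takes a genuinely different route from the paper's. Both write down the same $6\times 6$ block Jacobian, but from there the paper proceeds by elementary row and column operations: adding rows $4$--$6$ to rows $1$--$3$ and then subtracting columns $1$--$3$ from columns $4$--$6$ makes $J$ block lower-triangular, reducing the computation to the $3\times 3$ determinant $\det(I - 2ww^{\top})$, which is the Householder reflection across the hyperplane $w^{\perp}$ and hence equals $-1$. You instead exploit the rank-one structure of $ww^{\top}$ directly, choosing an orthonormal basis aligned with $w$ so that the $6\times 6$ matrix decouples (after conjugation by a permutation) into three $2\times 2$ blocks, only one of which is nontrivial.

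Your approach is slightly more conceptual in that it makes transparent \emph{why} the masses $m_i,m_j$ disappear from the final answer: all the mass-dependence sits in the single $2\times 2$ block along $w$, and the identity $\mu(1/m_i+1/m_j)=2$ is exactly what forces its determinant to $-1$. The paper's approach has the advantage of being coordinate-free throughout and arriving at the geometrically meaningful reflection $I-2ww^{\top}$, but requires one to recognize that determinant at the end. Either argument is entirely adequate here.
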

\begin{proof}
	A direct computation gives 
	\begin{align*}
		J= \frac{\partial(p',p_*')}{\partial(p,p_*)}= \left[\begin{array}{cc} \delta_{ij}-\frac{2m_1m_2}{m_1+m_2}\frac{w_iw_j}{m_1} & \frac{2m_1m_2}{m_1+m_2}\frac{w_iw_j}{m_2} \cr
			\frac{2m_1m_2}{m_1+m_2}\frac{w_iw_j}{m_1} & \delta_{ij}-\frac{2m_1m_2}{m_1+m_2}\frac{w_iw_j}{m_2}
		\end{array}\right].
	\end{align*}
	Adding the 4th-6th row of $J$ to the 1st-3rd row of $J$, respectively, then subtracting the 1st-3rd column of $J$ from the 4th-6th column of $J$, respectively gives 
	\begin{align*}
		\det J &= \det \left[\begin{array}{cc} \delta_{ij} & 0 \cr
			\frac{2m_1m_2}{m_1+m_2}\frac{w_iw_j}{m_1} & \delta_{ij}-\frac{2m_1m_2}{m_1+m_2}\frac{w_iw_j}{m_2}-\frac{2m_1m_2}{m_1+m_2}\frac{w_iw_j}{m_1}
		\end{array}\right] .
	\end{align*}
	Thus we have
	\begin{align*}
		\det J &= \det \left(\delta_{ij}-\frac{2m_1m_2}{m_1+m_2}\frac{w_iw_j}{m_2}-\frac{2m_1m_2}{m_1+m_2}\frac{w_iw_j}{m_1}\right) = \det (\delta_{ij}-2w_iw_j) = -1.
	\end{align*}
\end{proof}
\begin{lemma}\label{Qcomp} For $i,j,k=1,2$, and $i\neq j$, we have
\begin{align*}
&\noindent(1) \quad \int_{\mathbb{R}^3}\phi(p)Q_{kk}(f_k,f_k) dp = \frac{1}{4}\int_{\mathbb{R}^3}\int_{\mathbb{R}^3}\int_{\mathbb{S}^2}(\phi(p)+\phi(p_*)-\phi(p')-\phi(p_*'))\cr
&\hspace{20mm}\times B_{kk}\left(\bigg|\frac{p}{m_k}-\frac{p_*}{m_k}\bigg|,w\right)
\{f_k'f_{k,*}'(1\pm f_k)(1\pm f_{k,*})-f_kf_{k,*}(1\pm f_k')(1\pm f_{k,*}')\} dwdp_*dp, \cr
&\noindent(2) \quad \int_{\mathbb{R}^3}\phi(p)Q_{ij}(f_i,f_j) dp = \frac{1}{2}\int_{\mathbb{R}^3}\int_{\mathbb{R}^3}\int_{\mathbb{S}^2}(\phi(p)-\phi(p')) B_{ij}\left(\bigg|\frac{p}{m_i}-\frac{p_*}{m_j}\bigg|,w\right) \cr
&\hspace{20mm}\times
\{f_i'f_{j,*}'(1+\tau(i) f_i)(1+\tau(j)f_{j,*})-f_if_{j,*}(1+\tau(i) f_i')(1+ \tau(j)f_{j,*}')\} dwdp_*dp.
\end{align*}
where $\tau(i)=-1 $ when $f_i$ denotes the distribution of fermion and $\tau(i)=+1 $ when $f_i$ denotes the distribution of boson.
\end{lemma}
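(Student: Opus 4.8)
The plan is to prove both identities by the standard weak-formulation symmetrization, the only substantive ingredient being the collisional change of variables $(p,p_*)\mapsto(p',p_*')$ whose Jacobian is computed in Lemma \ref{J}. Throughout I abbreviate the bracketed gain-minus-loss factors appearing in $Q_{kk}$ and $Q_{ij}$ by $G_{kk}$ and $G_{ij}$ respectively.

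First I would record two structural properties of the map $(p,p_*)\mapsto(p',p_*')$ at fixed $w\in\mathbb{S}^2$. Writing $V=\frac{p}{m_i}-\frac{p_*}{m_j}$, the explicit collision rule gives $\frac{p'}{m_i}-\frac{p_*'}{m_j}=V-2(V\cdot w)w$, which is the reflection of $V$ across $w^{\perp}$; hence $\big|\frac{p'}{m_i}-\frac{p_*'}{m_j}\big|=|V|$, so the kernel $B_{ij}$ is invariant, and the map is an involution since $(V-2(V\cdot w)w)\cdot w=-V\cdot w$ returns $(p,p_*)$ on a second application. Because this involution exchanges the primed and unprimed quantities, it converts the gain term into the loss term and vice versa, so $G_{ij}\mapsto -G_{ij}$ and likewise $G_{kk}\mapsto -G_{kk}$. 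Combined with $|\det J|=1$ from Lemma \ref{J}, these two facts drive both computations.

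For part (2) I would apply only this change of variables. Starting from $\int\phi(p)Q_{ij}\,dp=\int\int\int\phi(p)B_{ij}G_{ij}\,dw\,dp_*\,dp$, the substitution fixes $B_{ij}$, sends $\phi(p)\mapsto\phi(p')$ and $G_{ij}\mapsto -G_{ij}$, so the same integral also equals $-\int\int\int\phi(p')B_{ij}G_{ij}$. Averaging the two expressions produces the factor $\tfrac12(\phi(p)-\phi(p'))$, which is exactly (2). For part (1) I would first exploit the additional symmetry $p\leftrightarrow p_*$ available in the single-species case: since both $B_{kk}$ and $G_{kk}$ are invariant under this swap (it merely interchanges $f_k\leftrightarrow f_{k,*}$ and $f_k'\leftrightarrow f_{k,*}'$), one may replace $\phi(p)$ by $\tfrac12(\phi(p)+\phi(p_*))$. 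Applying the collisional change of variables as above then yields $-\tfrac12(\phi(p')+\phi(p_*'))$, and averaging gives the symmetrized weight $\tfrac14(\phi(p)+\phi(p_*)-\phi(p')-\phi(p_*'))$, which is (1).

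The routine but genuinely error-prone point, and the main obstacle, is the sign bookkeeping needed to confirm that $G_{ij}$ is truly antisymmetric under the collisional swap in every case (fermion-fermion, boson-boson, and the mixed fermion-boson interaction). One must verify that the post-collisional factors $(1+\tau(i)f_i')(1+\tau(j)f_{j,*}')$ and the pre-collisional factors $(1+\tau(i)f_i)(1+\tau(j)f_{j,*})$ trade places correctly under the involution so that the sign flips uniformly in $\tau(i),\tau(j)$. This is precisely the structural reason why a single change of variables suffices for the inter-species operator, while the same-species operator additionally requires the $p\leftrightarrow p_*$ averaging to symmetrize the weight into $\tfrac14(\phi(p)+\phi(p_*)-\phi(p')-\phi(p_*'))$.
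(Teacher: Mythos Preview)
Your proposal is correct and follows essentially the same approach as the paper: both use the collisional change of variables $(p,p_*)\leftrightarrow(p',p_*')$ (with invariance of $B_{ij}$, the involution property, and $|\det J|=1$ from Lemma~\ref{J}) to obtain (2), and additionally the swap $(p,p_*)\leftrightarrow(p_*,p)$ in the single-species case to obtain (1). Your write-up is more detailed than the paper's terse one-line proof, but the content is identical.
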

\begin{proof}
Taking the change of variables $(p,p_*) \leftrightarrow (p_*,p)$ and $(p,p_*) \leftrightarrow (p',p_*')$, together with Lemma \ref{J}, gives (1). To prove (2), we first observe that
the collision kernel $B_{ij}$ is invariant under the change of variables $(p,p_*) \leftrightarrow (p',p_*')$ since
\begin{align*}
\bigg|\frac{p'}{m_i}-\frac{p_*'}{m_j}\bigg|&= 
\bigg|\frac{p}{m_i}-\frac{p_*}{m_j}-2w\left[\left(\frac{p}{m_i}-\frac{p_*}{m_j}\right)\cdot w\right]\bigg| = \bigg|\frac{p}{m_i}-\frac{p_*}{m_j}\bigg|.
\end{align*}
Therefore, applying the change of variables $(p,p_*) \leftrightarrow (p',p_*')$ together with Lemma \ref{J} gives the desired results. 
\end{proof}
\begin{remark}
We note that the exchange $(p,p_*) \leftrightarrow (p_*,p)$ does not leads to $(p',p_*')\leftrightarrow (p_*',p')$ in the collision opeartors $Q_{12}$ and $Q_{21}$ 
unless $m_1= m_2$. For example, if we change the notation $(p,p_*) \leftrightarrow (p_*,p)$ in $Q_{12}$,  we get
\begin{align*}
p'&= p-\frac{2m_1m_2}{m_1+m_2}w\left[\left(\frac{p}{m_1}-\frac{p_*}{m_2}\right)\cdot w\right] \quad \rightarrow \quad  p_*+\frac{2m_1m_2}{m_1+m_2}w\left[\left(\frac{p}{m_2}-\frac{p_*}{m_1}\right)\cdot w\right],
\end{align*}
which is not equal to $p_*'$ of $Q_{12}$. This is why $Q_{ij} ~(i\neq j)$ do not have the full symmetry as in $(1)$.
\end{remark}

\noindent$\bullet$ {\bf Proof of (\ref{cancellation}):} We only consider the last identity in (\ref{cancellation}), since other identities can be
treated in a similar and simpler manner. In view of the fact that the post collisional variables $(p',p_*')$ in $Q_{12}$ and $Q_{21}$ take different forms, we use the notation $\{p'\}_{12}$, $\{p_*'\}_{12}$ and $\{p'\}_{21}$, $\{p_*'\}_{21}$ to denote $p'$ and $p_*'$ in $Q_{12}$ and $Q_{21}$, respectively. We substitute $\phi(p)=|p|^2/2m_1$ in $Q_{12}$ and
use Lemma \ref{Qcomp} (2) to get
\begin{align}\label{Q12}
\begin{split}
\int_{\mathbb{R}^3}Q_{12}(f_1,f_2)\frac{|p|^2}{2m_1} dp &= \frac{1}{2}\int_{\mathbb{R}^3}\int_{\mathbb{R}^3}\int_{\mathbb{S}^2}\left(\frac{|p|^2}{2m_1}-\frac{|\{p'\}_{12}|^2}{2m_1}\right) B_{12}\left(\bigg|\frac{p}{m_1}-\frac{p_*}{m_2}\bigg|,w\right) \cr
& \quad \times \{f_1(\{p'\}_{12})f_2(\{p_*'\}_{12})(1+\tau(1) f_1(p))(1+\tau(2)f_2(p_*))\cr 
& \quad -f_1(p)f_2(p_*)(1+\tau(1) f_1(\{p'\}_{12}))(1+ \tau(2)f_2(\{p_*'\}_{12}))\} dwdp_*dp.
\end{split}
\end{align}
Similarly, substituting $\phi(p)=\frac{|p|^2}{2m_2}$ in $Q_{21}$  gives
\begin{align}\label{Q21}
\begin{split}
\int_{\mathbb{R}^3}Q_{21}(f_2,f_1)\frac{|p|^2}{2m_2} dp &= \frac{1}{2}\int_{\mathbb{R}^3}\int_{\mathbb{R}^3}\int_{\mathbb{S}^2}\left(\frac{|p|^2}{2m_2}-\frac{|\{p'\}_{21}|^2}{2m_2}\right) B_{21}\left(\bigg|\frac{p}{m_2}-\frac{p_*}{m_1}\bigg|,w\right) \cr
&\quad \times \{f_2(\{p'\}_{21})f_1(\{p_*'\}_{21})(1+\tau(2) f_2(p))(1+\tau(1)f_1(p_*)) \cr
&\quad -f_2(p)f_1(p_*)(1+\tau(2) f_2(\{p'\}_{21}))(1+ \tau(1)f_1(\{p_*'\}_{21}))\} dwdp_*dp.
\end{split}
\end{align}
We then note that the exchange of variables $(p,p_*) \leftrightarrow (p_*,p)$ in (\ref{Q21}) yields 
\begin{align*}
\{p'\}_{21} &= p-\frac{2m_2m_1}{m_2+m_1}w\left[\left(\frac{p}{m_2}-\frac{p_*}{m_1}\right)\cdot w\right]  ~ \rightarrow ~ p_*+\frac{2m_1m_2}{m_1+m_2}w\left[\left(\frac{p}{m_1}-\frac{p_*}{m_2}\right)\cdot w\right] =\{p_*'\}_{12}, \cr
\{p_*'\}_{21} &= p_*+\frac{2m_2m_1}{m_2+m_1}w\left[\left(\frac{p}{m_2}-\frac{p_*}{m_1}\right)\cdot w\right]  ~ \rightarrow ~   p-\frac{2m_1m_2}{m_1+m_2}w\left[\left(\frac{p}{m_1}-\frac{p_*}{m_2}\right)\cdot w\right] = \{p'\}_{12},
\end{align*}
so that 
\begin{align*}\label{Q12=}
\int_{\mathbb{R}^3}Q_{21}(f_2,f_1)\frac{|p|^2}{2m_2} dp &= \frac{1}{2}\int_{\mathbb{R}^3}\int_{\mathbb{R}^3}\int_{\mathbb{S}^2}\left(\frac{|p_*|^2}{2m_2}-\frac{|\{p_*'\}_{12}|^2}{2m_2}\right) B_{21}\left(\bigg|\frac{p}{m_1}-\frac{p_*}{m_2}\bigg|,w\right) \cr
&\quad \times \{f_2(\{p_*'\}_{12})f_1(\{p'\}_{12})(1+\tau(2) f_2(p_*))(1+\tau(1)f_1(p)) \cr
&\quad -f_2(p_*)f_1(p)(1+\tau(2) f_2(\{p_*'\}_{12}))(1+ \tau(1)f_1(\{p'\}_{12}))\} dwdp_*dp.
\end{align*}
Now, we combine \eqref{Q12} and (\ref{Q21}) and recall  $B_{12}=B_{21}$ to obtain 
\begin{align*}
\int_{\mathbb{R}^3}Q_{12}(f_1,f_2)&\frac{|p|^2}{2m_1} dp +\int_{\mathbb{R}^3}Q_{21}(f_2,f_1)\frac{|p|^2}{2m_2} dp \cr
&= \frac{1}{2}\int_{\mathbb{R}^3}\int_{\mathbb{R}^3}\int_{\mathbb{S}^2}\left(\frac{|p|^2}{2m_1}+\frac{|p_*|^2}{2m_2}-\frac{|\{p'\}_{12}|^2}{2m_1}-\frac{|\{p_*'\}_{12}|^2}{2m_2}\right) B_{12}\left(\bigg|\frac{p}{m_1}-\frac{p_*}{m_2}\bigg|,w\right) \cr
& \quad \times \{f_1(\{p'\}_{12})f_2(\{p_*'\}_{12})(1+\tau(1) f_1(p))(1+\tau(2)f_2(p_*))\cr 
& \quad -f_1(p)f_2(p_*)(1+\tau(1) f_1(\{p'\}_{12}))(1+ \tau(2)f_2(\{p_*'\}_{12}))\} dwdp_*dp.
\end{align*}
The r.h.s vanishes due to the microscopic energy conservation law   \eqref{mom_after_coll} with $(i,j)=(1,2)$, which gives desired result. \newline

\subsection{Conservation laws: $v$ vs $p$}
Let $\bar{f}(x,v,t)$ denote the velocity distribution function and $f(x,p,t)$ denote the momentum distribution function. Then we can reconcile the conservation laws w.r.t the velocity distribution $\bar{f}(x,v,t)$ and the conservation laws w.r.t $f(x,p,t)$ upon imposing $(i=1,2)$
\begin{align*}
	\bar{f}_i(x,v,t)=\bar{f}_i\Big(x,\frac{p}{m_i},t\Big) = m_i^3f_i(x,p,t).
\end{align*}
This relation, together with the change of variable $m_iv=p$ gives
\begin{align*}
\int \bar{f}_i(x,v,t)dxdv = \int \bar{f}_i\left(x,\frac{p}{m_i},t\right)dxdv
&=\int \frac{1}{m_i^3}\bar{f}_i\Big(x,\frac{p}{m_i},t\Big)dxdp = \int f_i(x,p,t)dxdp .
\end{align*}
Similarly, we have 
\begin{align*}
\int\bar{f}_i(x,v,t)\left(\begin{array}{c} m_iv \cr \frac{1}{2}m_1|v|^2\end{array}\right)dxdv&=\int \bar{f}_i\left(x,\frac{p}{m_i},t\right)\left(\begin{array}{c} p \cr \frac{1}{2m_i}|p|^2\end{array}\right)dxdv\cr	
&=\int\frac{1}{m^3_i}\bar{f}_i\left(x,\frac{p}{m_i},t\right)\left(\begin{array}{c} p \cr \frac{1}{2m_i}|p|^2\end{array}\right)dxdp\cr
& =\int f_i(x,p,t)\left(\begin{array}{c} p \cr \frac{1}{2m_1}|p|^2\end{array}\right)dxdp.
\end{align*}

\noindent{\bf Acknowledgement:}
Christian Klingenberg  acknowledges support by the DFG grant KL-566/20-2.
Marlies Pirner is supported by the Austrian Science Fund (FWF) project F65 and the Humboldt foundation.
Seok-Bae Yun is supported by Samsung Science and Technology Foundation under Project Number SSTF-BA1801-02.
\bibliographystyle{amsplain}

\begin{thebibliography}{10}
\bibitem{MR1889599} Andries, P., Aoki, K. and Perthame, B.: 
A consistent BGK-type model for gas mixtures. J. Statist. Phys. {\bf106} (2002), no. 5-6, 993-1018.

\bibitem{ashcroft1976solid} Ashcroft, N. W. and Mermin, N. D.: Solid State Physic Holt. Rinehart and Winston, New York, USA. (1976).

\bibitem{MR4064205} Bae, G.-C., Yun, S.-B.: Stationary quantum BGK model for bosons and fermions in a bounded interval.
J. Stat. Phys. {\bf178} (2020), no. 4, 845-868.

\bibitem{MR4096124} Bae, G.-C., Yun, S-B.: Quantum BGK model near a global Fermi-Dirac distribution. SIAM J. Math. Anal. {\bf52} (2020), no. 3, 2313-2352.

\bibitem{MR2403867} Bennoune, M., Lemou, M. and Mieussens, L.: Uniformly stable numerical schemes for the Boltzmann equation preserving the compressible Navier-Stokes asymptotics. J. Comput. Phys. {\bf227} (2008), no. 8, 3781-3803.

\bibitem{MR3411286} Bernard, F., Iollo, A. and Puppo, G.: Accurate asymptotic preserving boundary conditions for kinetic equations on Cartesian grids. 
J. Sci. Comput. {\bf65} (2015), no. 2, 735-766.

\bibitem{bhathnagor1954model} Bhathnagor, P., Gross, E. and Krook, M.: A model for collision processes in gases. Physical Review, {\bf94} (1954), no. 3, 511.


\bibitem{MR3436242} Bisi, M., C\'{a}ceres, M. J.: A BGK relaxation model for polyatomic gas mixtures. Commun. Math. Sci. {\bf14} (2016), no. 2, 297-325.

\bibitem{bisi2010kinetic} Bisi, M., Groppi, M. and Spiga, G.: Kinetic Bhatnagar-Gross-Krook model for fast reactive mixtures and its hydrodynamic limit. Physical Review E. {\bf81} (2010), no. 3, 036327.

\bibitem{MR3815148} Bobylev, A. V., Bisi, M., Groppi, M., Spiga, G. and Potapenko, I. F.: A general consistent BGK model for gas mixtures. Kinet. Relat. Models {\bf11} (2018), no. 6, 1377-1393.

\bibitem{MR3918275} Braukhoff, M.: Semiconductor Boltzmann-Dirac-Benney equation with a BGK-type collision operator: existence of solutions vs. ill-posedness. Kinet. Relat. Models {\bf12} (2019), no. 2, 445-482.

\bibitem{braukhoff2018global} Braukhoff, M.: Global analytic solutions of the semiconductor Boltzmann-Dirac-Benney equation with relaxation time approximation.(2018) arXiv preprint arXiv:1803.00379.

\bibitem{MR2896732} Brull, S., Pavan, V. and Schneider, J.: Derivation of a BGK model for mixtures. Eur. J. Mech. B Fluids {\bf33} (2012), 74-86.

\bibitem{MR0258399} Chapman, S., Cowling, T. G.: The mathematical theory of non-uniform gases. An account of the kinetic theory of viscosity, thermal conduction and diffusion in gases. (1970).

\bibitem{MR3131732} Crouseilles, N., Manfredi, G.: Asymptotic preserving schemes for the Wigner-Poisson-BGK equations in the diffusion limit. Comput. Phys. Commun. {\bf185} (2014), no. 2, 448-458.

\bibitem{MR3231759} Dimarco, G., Mieussens, L. and Rispoli, V.: An asymptotic preserving automatic domain decomposition method for the Vlasov-Poisson-BGK system with applications to plasmas. J. Comput. Phys. {\bf274} (2014), 122-139.

\bibitem{MR3202241} Dimarco, G., Pareschi, L.: Numerical methods for kinetic equations. Acta Numer. {\bf23} (2014), 369-520.

\bibitem{drude1900elektronentheorie} Drude, P.: Zur elektronentheorie der metalle. Annalen der physik. {\bf306} (1900), no. 3, 566-613.

\bibitem{drude1900elektronentheorie2} Drude, P.: Zur elektronentheorie der metalle; II. Teil. galvanomagnetische und thermomagnetische effecte. Annalen der Physik. {\bf308} (1900), no. 11, 369-402.

\bibitem{duan2005introduction} Duan, F., Guojun, J.: Introduction To Condensed Matter Physics: Volume 1 (Vol. 1). World Scientific Publishing Company. (2005).

\bibitem{MR2145021} Escobedo, M., Mischler, S. and Valle, M.: Entropy maximisation problem for quantum relativistic particles. Bull. Soc. Math. France {\bf133} (2005), no. 1, 87-120.

\bibitem{filbet2010numerical} Filbet, F., Hu, J. and Jin, S.: A numerical scheme for the quantum Boltzmann equation efficient in the fluid regime. (2010) arXiv preprint arXiv:1009.3352. 

\bibitem{MR2855649} Filbet, F., Hu, J. and Jin, S.: A numerical scheme for the quantum Boltzmann equation with stiff collision terms. ESAIM Math. Model. Numer. Anal. {\bf46} (2012), no. 2, 443-463.

\bibitem{MR2674294} Filbet, F., Jin, S.: A class of asymptotic-preserving schemes for kinetic equations and related problems with stiff sources. J. Comput. Phys. {\bf229} (2010), no. 20, 7625-7648.

\bibitem{fowler1928electron} Fowler, R. H., Nordheim, L.: Electron emission in intense electric fields. Proceedings of the Royal Society of London. Series A, Containing Papers of a Mathematical and Physical Character, {\bf119} (1928), no. 781, 173-181.

\bibitem{garzo1989kinetic} Garz\'{o}, V., Santos, A. and Brey, J. J.: A kinetic model for a multicomponent gas. Physics of Fluids A: Fluid Dynamics, {\bf1} (1989), no. 2, 380-383.

\bibitem{greene1973improved} Greene, J. M.: Improved Bhatnagar--Gross--Krook model of electron--ion collisions. Plasma Physics Laboratory, Princeton University, Princeton, New Jersey 08540 (1973).

\bibitem{groppi2011kinetic} Groppi, M., Monica, S. and Spiga, G.: A kinetic ellipsoidal BGK model for a binary gas mixture. EPL (Europhysics Letters), {\bf96} (2011), no. 6, 64002.

\bibitem{gross1956model} Gross, E. P., Krook, M. Model for collision processes in gases: Small-amplitude oscillations of charged two-component systems. Physical Review, {\bf102} (1956), no. 3, 593.

\bibitem{MR3680629} Haack, J., Hauck, C. D., Murillo, M. S.: A conservative, entropic multispecies BGK model. J. Stat. Phys. {\bf168} (2017), no. 4, 826-856.

\bibitem{hamel1965kinetic} Hamel, B. B.: Kinetic model for binary gas mixtures. The Physics of Fluids, {\bf8} (1965), no. 3, 418-425.

\bibitem{MR2370359} Ha, S.-Y., Noh, S. E., Yun, S.-B.: Global existence and stability of mild solutions to the Boltzmann system for gas mixtures. Quart. Appl. Math. {\bf65} (2007), no. 4, 757-779.

\bibitem{MR2786396} Hu, J., Jin, S.: On kinetic flux vector splitting schemes for quantum Euler equations. Kinet. Relat. Models {\bf4} (2011), no. 2, 517-530.

\bibitem{ihn2004electronic} Ihn, T.: Electronic quantum transport in mesoscopic semiconductor structures (Vol. 192). Springer (2004).

\bibitem{jungel2009transport} J\"{u}ngel, A.: Transport equations for semiconductors (Vol. 773). Springer (2009).

\bibitem{MR1084373} Khalatnikov, I. M.: An introduction to the theory of superfluidity. Translated from the Russian by Pierre C. Hohenberg. Translation edited and with a foreword by David Pines. Reprint of the 1965 edition. Advanced Book Classics. Addison-Wesley Publishing Company, Advanced Book Program, Redwood City, CA, 1989.

\bibitem{kikuchi1930kinetische} Kikuchi, S., Nordheim, L.: \"{U}ber die kinetische Fundamentalgleichung in der Quantenstatistik. Zeitschrift für Physik A Hadrons and nuclei, {\bf60} (1930), no. 9-10, 652-662.

\bibitem{MR3720827} Klingenberg, C., Pirner, M.: Existence, uniqueness and positivity of solutions for BGK models for mixtures. J. Differential Equations {\bf264} (2018), no. 2, 702-727.

\bibitem{MR3960644} Klingenberg, C., Pirner, M. and Puppo, G.: A consistent kinetic model for a two-component mixture of polyatomic molecules. Commun. Math. Sci. {\bf17} (2019), no. 1, 149-173.

\bibitem{MR3579578} Klingenberg, C., Pirner, M. and Puppo, G.: A consistent kinetic model for a two-component mixture with an application to plasma. Kinet. Relat. Models {\bf10} (2017), no. 2, 445-465.

\bibitem{MR3828279} Klingenberg, C., Pirner, M. and Puppo, G.: Kinetic ES-BGK models for a multi-component gas mixture. Theory, numerics and applications of hyperbolic problems. II, 195-208, Springer Proc. Math. Stat., 237, Springer, Cham, 2018.

\bibitem{MR1751703} Lu, X.: A modified Boltzmann equation for Bose-Einstein particles: isotropic solutions and long-time behavior. J. Statist. Phys. {\bf98} (2000), no. 5-6, 1335-1394.

\bibitem{MR1861208} Lu, X.: On spatially homogeneous solutions of a modified Boltzmann equation for Fermi-Dirac particles. J. Statist. Phys. {\bf105} (2001), no. 1-2, 353-388.

\bibitem{MR1063852} Markowich, P. A., Ringhofer, C. A. and Schmeiser, C.: Semiconductor equations. Springer-Verlag, Vienna, 1990.

\bibitem{mcgaughey2004quantitative} McGaughey, A. J., Kaviany, M.: Quantitative validation of the Boltzmann transport equation phonon thermal conductivity model under the single-mode relaxation time approximation. Physical Review B, {\bf69} (2004), no. 9, 094303.

\bibitem{MR2892305} Muljadi, B. P., Yang, J.-Y.: Simulation of shock wave diffraction by a square cylinder in gases of arbitrary statistics using a semiclassical Boltzmann-Bhatnagar-Gross-Krook equation solver.  Proc. R. Soc. Lond. Ser. A Math. Phys. Eng. Sci. {\bf468} (2012), no. 2139, 651-670.

\bibitem{MR2378854} Nouri, A.: An existence result for a quantum BGK model. Math. Comput. Modelling {\bf47} (2008), no. 3-4, 515-529.

\bibitem{MR2320557} Pieraccini, S., Puppo, G.: Implicit-explicit schemes for BGK kinetic equations. J. Sci. Comput. {\bf32} (2007), no. 1, 1-28.

\bibitem{MR3880999} Pirner, M.: A BGK model for gas mixtures of polyatomic molecules allowing for slow and fast relaxation of the temperatures. J. Stat. Phys. {\bf173} (2018), no. 6, 1660-1687.

\bibitem{rapp2010equilibration} Rapp, A., Mandt, S., and Rosch, A.: Equilibration rates and negative absolute temperatures for ultracold atoms in optical lattices. Physical review letters. {\bf105} (2010), no. 22, 220405.

\bibitem{MR3323935} Reinhard, P.-G., Suraud, E.: A quantum relaxation-time approximation for finite fermion systems. Ann. Physics {\bf354} (2015), 183-202.

\bibitem{MR1489261} Ringhofer, C.: Computational methods for semiclassical and quantum transport in semiconductor devices. Acta numerica, {\bf6} (1997), 485-521.

\bibitem{MR2970736} Russo, G., Santagati, P. and Yun, S.-B.: Convergence of a semi-Lagrangian scheme for the BGK model of the Boltzmann equation. SIAM J. Numer. Anal. {\bf50} (2012), no. 3, 1111-1135.

\bibitem{MR3892427} Russo, G., Yun, S.-B.: Convergence of a semi-Lagrangian scheme for the ellipsoidal BGK model of the Boltzmann equation. SIAM J. Numer. Anal. {\bf56} (2018), no. 6, 3580-3610.

\bibitem{schneider2012fermionic} Schneider, U., Hackerm\"{u}ller, L., Ronzheimer, J. P., Will, S., Braun, S., Best, T., Bloch, I., Demler, E., Mandt, S., Rasch, D. and Rosch, A.: Fermionic transport and out-of-equilibrium dynamics in a homogeneous Hubbard model with ultracold atoms. Nature Physics, {\bf8} (2012), no. 3, 213-218.

\bibitem{MR2467628} Shi, Y.-H., Yang, J. Y.: A gas-kinetic BGK scheme for semiclassical Boltzmann hydrodynamic transport. J. Comput. Phys. {\bf227} (2008), no. 22, 9389-9407.

\bibitem{sofonea2001bgk} Sofonea, V., Sekerka, R. F.: BGK models for diffusion in isothermal binary fluid systems. Physica A: Statistical Mechanics and its Applications, {\bf299} (2001), no. 3-4, 494-520.

\bibitem{sparavigna2016boltzmann} Sparavigna, A. C.: The Boltzmann equation of phonon thermal transport solved in the relaxation time approximation-I-Theory. (2016).

\bibitem{suh1991numerical} Suh, N. D., Feix, M. R. and Bertrand, P.: Numerical simulation of the quantum Liouville-Poisson system. Journal of Computational Physics, {\bf94} (1991), no. 2, 403-418.

\bibitem{MR3943439} Todorova, B. N., Steijl, R.: Derivation and numerical comparison of Shakhov and ellipsoidal statistical kinetic models for a monoatomic gas mixture. Eur. J. Mech. B Fluids {\bf76} (2019), 390-402.

\bibitem{uehling1933transport} Uehling, E. A., Uhlenbeck, G. E.: Transport phenomena in einstein-bose and fermi-dirac gases. i. Physical Review, {\bf43} (1933), no. 7, 552.

\bibitem{uehling1934transport} Uehling, E. A.: Transport phenomena in einstein-bose and fermi-dirac gases. ii. Physical Review, {\bf46} (1934), no. 10, 917.

\bibitem{MR2923847} Wu, L., Meng, J. and Zhang, Y.: Kinetic modelling of the quantum gases in the normal phase. Proc. R. Soc. Lond. Ser. A Math. Phys. Eng. Sci. {\bf468} (2012), no. 2142, 1799-1823.

\bibitem{yang2009lattice} Yang, J. Y., Hung, L. H. Lattice uehling-uhlenbeck boltzmann-bhatnagar-gross-krook hydrodynamics of quantum gases. Physical Review E, {\bf79} (2009), no. 5, 056708.

\bibitem{MR3581504} Yano, R.: Fast and accurate calculation of dilute quantum gas using Uehling-Uhlenbeck model equation. J. Comput. Phys. {\bf330} (2017), 1010-1021.


\end{thebibliography}

\end{document}